\newtheorem*{thm*}{Theorem A}
\newtheorem{thm}{Theorem}
\newtheorem{dfn}{Definition}
\newtheorem{lemma}{Lemma}
\newtheorem{remark}{Remark}
\newtheorem{prop}{Proposition}
\begin{document}

\def\d{ \partial_{x_j} }
\def\Na{{\mathbb{N}}}

\def\Z{{\mathbb{Z}}}

\def\IR{{\mathbb{R}}}

\newcommand{\E}[0]{ \varepsilon}

\newcommand{\la}[0]{ \lambda}

\newcommand{\s}[0]{ \mathcal{S}}

\newcommand{\AO}[1]{\| #1 \| }

\newcommand{\BO}[2]{ \left( #1 , #2 \right) }

\newcommand{\CO}[2]{ \left\langle #1 , #2 \right\rangle}

\newcommand{\R}[0]{ \IR\cup \{\infty \} }

\newcommand{\co}[1]{ #1^{\prime}}

\newcommand{\p}[0]{ p^{\prime}}

\newcommand{\m}[1]{   \mathcal{ #1 }}

\newcommand{ \W}[0]{ \mathcal{W}}

\newcommand{ \A}[1]{ \left\| #1 \right\|_H }

\newcommand{\B}[2]{ \left( #1 , #2 \right)_H }

\newcommand{\C}[2]{ \left\langle #1 , #2 \right\rangle_{  H^* , H } }

 \newcommand{\HON}[1]{ \| #1 \|_{ H^1} }

\newcommand{ \Om }{ \Omega}

\newcommand{ \pOm}{\partial \Omega}

\newcommand{\D}{ \mathcal{D} \left( \Omega \right)}

\newcommand{\DP}{ \mathcal{D}^{\prime} \left( \Omega \right)  }

\newcommand{\DPP}[2]{   \left\langle #1 , #2 \right\rangle_{  \mathcal{D}^{\prime}, \mathcal{D} }}

\newcommand{\PHH}[2]{    \left\langle #1 , #2 \right\rangle_{    \left(H^1 \right)^*  ,  H^1   }    }

\newcommand{\PHO}[2]{  \left\langle #1 , #2 \right\rangle_{  H^{-1}  , H_0^1  }}

 \newcommand{\HO}{ H^1 \left( \Omega \right)}

\newcommand{\HOO}{ H_0^1 \left( \Omega \right) }

\newcommand{\CC}{C_c^\infty\left(\Omega \right) }

\newcommand{\N}[1]{ \left\| #1\right\|_{ H_0^1  }  }

\newcommand{\IN}[2]{ \left(#1,#2\right)_{  H_0^1} }

\newcommand{\INI}[2]{ \left( #1 ,#2 \right)_ { H^1}}

\newcommand{\HH}{   H^1 \left( \Omega \right)^* }

\newcommand{\HL}{ H^{-1} \left( \Omega \right) }

\newcommand{\HS}[1]{ \| #1 \|_{H^*}}

\newcommand{\HSI}[2]{ \left( #1 , #2 \right)_{ H^*}}

\newcommand{\WO}{ W_0^{1,p}}
\newcommand{\w}[1]{ \| #1 \|_{W_0^{1,p}}}

\newcommand{\ww}{(W_0^{1,p})^*}

\newcommand{\Ov}{ \overline{\Omega}}

\title{Stability of entire solutions to supercritical elliptic problems involving advection}
\author{Craig Cowan\\
{\it\small Department of Mathematical Sciences}\\
{\it\small University of Alabama in Huntsville}\\
{\it\small 258A Shelby Center}\\
\it\small Huntsville, AL 35899 \\
{\it\small ctc0013@uah.edu} }

\maketitle

\vspace{3mm}

\begin{abstract} We examine the equation given by
\begin{equation} \label{eq_abstract}
-\Delta u + a(x) \cdot \nabla u = u^p \qquad \mbox{in $ \IR^N$,}
\end{equation}  where $p>1$ and  $ a(x)$ is a smooth  vector field satisfying some decay conditions.  We show that for $ p < p_c$, the Joseph-Lundgren exponent, that there is no positive stable solution of  (\ref{eq_abstract}) provided one imposes a smallness condition on $a$ along with a divergence free condition. In the other direction we show that for $ N \ge 4$ and $ p > \frac{N-1}{N-3}$ there exists a positive solution of (\ref{eq_abstract}) provided $a$ satisfies a smallness condition.  For $ p>p_c$ we show the existence of a positive stable solution of (\ref{eq_abstract}) provided $a$ is divergence free and satisfies a smallness condition.
 \end{abstract}

\noindent
{\it \footnotesize 2010 Mathematics Subject Classification}. {\scriptsize }\\
{\it \footnotesize Key words: Entire solutions, Liouville theorems, Stability, Advection}. {\scriptsize }

\section{Introduction and  results}

In this article we are interested the existence versus nonexistence of positive stable solutions of
\begin{equation} \label{eq}
-\Delta u + a(x) \cdot \nabla u = u^p \qquad \mbox{in $ \IR^N$,}
\end{equation}  where $p>1$ and  $ a(x)$ is a smooth  vector field satisfying some decay conditions.  We now define the notion of stability and for this we prefer to work on a general domain.

\begin{dfn} Let $ u $ denote a nonnegative smooth solution of (\ref{eq}) in an open set $\Omega \subset \IR^N$.   We say $ u$ is a stable solution of (\ref{eq}) in $ \Omega$  provided there is some smooth positive function $ E$ such that
\begin{equation} \label{linearized}
-\Delta E + a(x) \cdot \nabla E   \ge p u^{p-1} E  \qquad \mbox{in $ \Omega$.}
\end{equation}
\end{dfn}

 We begin by recalling some facts in the case where $a(x)=0$.
There has been much work done on the existence and nonexistence of positive classical solutions of
\begin{equation} \label{lane_class}
-\Delta u = u^p, \qquad \mbox{in $ \IR^N$.}
\end{equation}  For $ N \ge 3$ there exists a critical value of $p$, given by $ p_S= \frac{N+2}{N-2}$, such that for $ 1 <p < p_S$ there is no positive classical solution of (\ref{lane_class}) and for $  p > p_S$ there exist positive classical solutions,  see \cite{Caf,chen,gidas,Gidas}.
 By definition we call a nonnegative solution $u$ of (\ref{lane_class}) stable if
\begin{equation} \label{stable} \int p u^{p-1} \phi^2 \le \int |\nabla \phi|^2 \qquad \forall \phi \in C_c^\infty(\IR^N),
\end{equation} which is nothing more than the stability of $u$ using (\ref{linearized}), after using a variational principle.
The additional requirement that the solution be stable drastically alters  the existence versus nonexistence results.     It is known that there is a new critical exponent, the so called Joseph-Lundgren exponent $p_{c}$,  such that for all $ 1 <p < p_{c}$ there is no positive stable solution of (\ref{lane_class}) and for $ p>p_{c} $ there exists positive stable solutions of (\ref{lane_class}).    The value of the $p_c$ is given by
\begin{equation*}
p_c= \left\{
\begin{array}{lr}
\frac{   (N-2)^2-4N+8\sqrt{N-1}}{(N-2)(N-10)} & \qquad N \ge 11 \\
\infty & \qquad  3 \le N \le 10.
\end{array}
\right.
\end{equation*}   The first implicit appearance of $p_c$ was in the work \cite{Joseph_lundgren} where they examined $ -\Delta u = \lambda (u+1)^p$ on the unit ball in $ \IR^N$ with zero Dirichlet boundary conditions.   The exponent  $p_c$ first explicitly appeared in the works
  \cite{Wang_solo, Gui_Ni_Wang}  where they examined  the stability of radial solutions to a parabolic version of (\ref{lane_class}).  Their results easily imply the existence of a positive radial stable solution of (\ref{lane_class})  when $ p >p_c$ and the nonexistence of positive radial stable solutions in the case of $  p < p_c$.
More recently there has been interest in finite Morse index solutions of either (\ref{lane_class}) and the generalized version given by
\begin{equation} \label{far}
-\Delta u = |u|^{p-1} u, \qquad \mbox{in $ \IR^N$.}
\end{equation} In \cite{farina} they completely classified the finite Morse index solutions of (\ref{far}) and  again the critical exponent $p_c$ was involved.
For results regarding singular nonlinearities, general nonlinearities, or quasilinear equation see \cite{ces,e1,e2,egg,zz,cabreent}.

In the  work \cite{Cowan_fazly}  the nonexistence of nontrivial solutions of
\[ -div( \omega_1 \nabla u) = \omega_2 u^p \qquad \mbox{ in $ \IR^N$},\] was examined where  $ \omega_i$ are some nonnegative functions. In the special case where $ \omega_1=\omega_2$ this equation reduces to
 \begin{equation} \label{var}
 -\Delta u + \nabla \gamma(x) \cdot \nabla u = u^p \qquad \mbox{in $ \IR^N$},
  \end{equation} where $ \gamma$ is a scalar function.   Even though (\ref{var}) and (\ref{eq}) are similar a major difference is that (\ref{var}) is variational in nature; critical points of
  \[ E(u)= \frac{1}{2} \int e^{-\gamma} |\nabla u|^2 - \frac{1}{p+1} \int e^{-\gamma} |u|^{p+1},\] are solutions of (\ref{var}). This variational structure of (\ref{var})  allows one to prove various nonexistence results for (\ref{var}) by slightly modifying the nonexistence proofs used in proving similar results for  $-\Delta u = u^p $ in $ \IR^N$.  This approach will generally not work for (\ref{eq}) since in general there will not be a variational structure.

In \cite{advection} the regularity of the extremal solution, $u^*$, associated with problems of the form
 \begin{eqnarray*}
 \left\{ \begin{array}{lcl}
\hfill   -\Delta u +a(x) \cdot \nabla u   &=& \lambda f(u) \qquad \mbox{ in } \Omega  \\
\hfill u &=& 0 \qquad  \qquad \quad \mbox{ on } \pOm,
\end{array}\right.
  \end{eqnarray*}  was examined for various nonlinearities $f$.   Here $a(x)$ was an arbitrary smooth advection and the main difficulty  was to to utilize the stability of $u^*$ in a meaningful way.  As mentioned earlier, this is not a problem when $a(x)$ is the gradient of a scalar function.  The main tool used was the generalized Hardy inequality from \cite{craig}.   This same approach was extended to more general nonlinearities in \cite{advect_2}.

We now list our results.
\begin{thm} \label{smallness}   Suppose $ 3 \le N \le 10$ or $ N \ge 11$ and $ 1 < p < p_c$. Suppose $a(x)$ is a smooth divergence free vector field satisfying $ | a(x) |  \le \frac{C}{|x|+1}$ with $0<C$ sufficiently small. Then there is no positive stable solution of (\ref{eq}).

\end{thm}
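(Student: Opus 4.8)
The plan is to adapt the integral-estimate method of \cite{farina} for $-\Delta u = u^p$ to the present setting; the only genuinely new ingredient is converting the supersolution formulation of stability (the existence of $E>0$ with $-\Delta E + a\cdot\nabla E \ge p u^{p-1}E$) into a quadratic-form inequality resembling \eqref{stable}. Once such an inequality is available, the divergence-free and decay hypotheses on $a$ let the advection contributions be absorbed, and the argument reduces to a small perturbation of the classical one.

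First I would derive the quadratic-form inequality. Let $E>0$ be the function from the stability definition and fix $\phi\in C_c^\infty(\IR^N)$. Multiplying the supersolution inequality by $\phi^2/E\ge 0$ and integrating gives
\[ \int \frac{\phi^2}{E}(-\Delta E) + \int \frac{\phi^2}{E}\, a\cdot\nabla E \ge p\int u^{p-1}\phi^2 . \]
Writing $w=\log E$, an integration by parts turns the first term into $2\int \phi\nabla\phi\cdot\nabla w - \int \phi^2|\nabla w|^2$ and the second into $\int \phi^2 a\cdot\nabla w$. The left-hand side is concave in $\nabla w$, and maximising pointwise in $\nabla w$ yields the bound $\tfrac14\int|2\nabla\phi+\phi a|^2$. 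Expanding and using $\int \phi\nabla\phi\cdot a=\tfrac12\int a\cdot\nabla(\phi^2)=-\tfrac12\int(\nabla\cdot a)\phi^2=0$ (here the divergence-free condition is essential) leaves
\[ p\int u^{p-1}\phi^2 \le \int |\nabla\phi|^2 + \frac14\int \phi^2 |a|^2 . \]
Since $|a|^2\le C^2/(|x|+1)^2\le C^2/|x|^2$, Hardy's inequality (valid for $N\ge3$) absorbs the last term into the gradient term, producing the perturbed stability inequality
\[ p\int u^{p-1}\phi^2 \le (1+\delta)\int |\nabla\phi|^2, \qquad \delta=\frac{C^2}{(N-2)^2}. \]

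Next I would run the argument of \cite{farina} on this inequality. Testing \eqref{eq} against $u^{2\gamma-1}\zeta^2$ and integrating by parts gives an identity for $\int u^{2\gamma-2}|\nabla u|^2\zeta^2$ in which the advection term becomes $-\tfrac1\gamma\int u^{2\gamma}\zeta\, a\cdot\nabla\zeta$ (again using $\nabla\cdot a=0$). Inserting the admissible test function $\phi=u^\gamma\zeta$ into the perturbed stability inequality and eliminating the gradient term between the two relations produces
\[ \Big(p-(1+\delta)\tfrac{\gamma^2}{2\gamma-1}\Big)\int u^{p+2\gamma-1}\zeta^2 \le C\!\int u^{2\gamma-1}|\zeta||\nabla u||\nabla\zeta| + C\!\int u^{2\gamma}|\nabla\zeta|^2 + C\!\int u^{2\gamma}|\zeta||a||\nabla\zeta| . \]
Here the decay hypothesis does its second job: on the support of $\nabla\zeta$, where $\zeta$ is a standard cutoff with $|\nabla\zeta|\le C/R$, we have $|a|\le C/R$, so the advection cutoff term is of the same order $R^{-2}$ as the usual term $\int u^{2\gamma}|\nabla\zeta|^2$ and is handled identically by Young's and H\"older's inequalities.

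The conclusion then follows the classical scaling: after first checking $\int u^{p+2\gamma-1}<\infty$, one obtains $\int_{B_R}u^{p+2\gamma-1}\le CR^{\,N-\frac{2(p+2\gamma-1)}{p-1}}$, and letting $R\to\infty$ forces $u\equiv0$ whenever $\gamma$ can be chosen with $\gamma>\tfrac{(N-2)(p-1)}{4}$ (to make the power of $R$ favorable) and $(1+\delta)\gamma^2-2p\gamma+p<0$ (to keep the coefficient positive), contradicting $u>0$. When $\delta=0$ the admissible upper bound for $\gamma$ is $p+\sqrt{p^2-p}$, and the two constraints are compatible exactly when $p<p_c$ (with $p_c=\infty$ for $3\le N\le10$). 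I expect the main obstacle to be quantitative control of the perturbation: one must verify that the admissible window for $\gamma$ survives when $\delta>0$. Since the relevant upper root $\tfrac{p+\sqrt{p^2-(1+\delta)p}}{1+\delta}$ depends continuously on $\delta$ and tends to $p+\sqrt{p^2-p}$ as $\delta\to0$, choosing $C$—and hence $\delta$—small enough preserves the strict inequality $p<p_c$ and keeps a valid $\gamma$ available. This is precisely where the smallness of $C$ is used.
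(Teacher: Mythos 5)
Your proposal is correct and follows the same overall skeleton as the paper: convert the supersolution form of stability (\ref{linearized}) into a quadratic-form inequality, run Farina's test-function argument with $\phi=u^{\gamma}\zeta$, absorb the advection contributions using $|a|\le C/(|x|+1)$ together with the classical Hardy inequality and the smallness of $C$, and conclude from the condition $\gamma>\frac{(N-2)(p-1)}{4}$ versus the admissible window $\gamma<p+\sqrt{p^2-p}$, which is exactly (\ref{cond}), i.e.\ $p<p_c$. The one genuine difference is in how stability is converted. The paper invokes the generalized Hardy inequality (\ref{hardy_mine}) from \cite{craig} with a parameter $\beta<1$, retains the positive term $\beta(1-\beta-\E)\int |\nabla E|^2E^{-2}u^{2t}\psi^2$ to absorb $\int \frac{a\cdot\nabla E}{E}u^{2t}\psi^2$ via Young, and keeps a factor $T>1$ of $\int|\nabla(u^t\psi)|^2$ on the left to absorb $\int|a|^2u^{2t}\psi^2$ via Hardy; this is packaged as Proposition \ref{prop_1}, whose extra parameters are reused later for Theorem \ref{decay}. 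You instead complete the square in $\nabla\log E$ \emph{with the drift included}, which (using $\operatorname{div}a=0$ to kill the cross term) gives the clean perturbed stability inequality $p\int u^{p-1}\phi^2\le(1+\delta)\int|\nabla\phi|^2$ with $\delta=C^2/(N-2)^2$, and then the perturbation analysis of the root $\frac{p+\sqrt{p^2-(1+\delta)p}}{1+\delta}$ is a tidy way to see exactly where smallness of $C$ enters. Two bookkeeping points to tighten: (i) the term $\int u^{2\gamma-1}\zeta|\nabla u||\nabla\zeta|$ cannot be Young-absorbed once the gradient term has been fully eliminated; either retain an $\E$-fraction of $\int u^{2\gamma-2}|\nabla u|^2\zeta^2$ or, as the paper does, integrate by parts to turn it into $\frac{1}{4\gamma}\int u^{2\gamma}|\Delta(\zeta^2)|$, which is of the harmless order $R^{-2}$; (ii) no a priori global integrability of $u^{p+2\gamma-1}$ is needed --- taking $\psi=\phi^m$ with $m$ large and applying H\"older lets the right-hand side be absorbed using only the (automatic) local finiteness of $\int u^{p+2\gamma-1}\phi^{2m}$.
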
  The next result gives a decay estimate in the case of $ p < p_c$.  We are including this result since it may allow one to use a Lane-Emden type of change of variables to obtain a nonexistence  result without a smallness condition on the advection.

\begin{thm} \label{decay}   Suppose $ \frac{N+2}{N-2} <p <p_c$, $a(x)$ is a smooth divergence free vector field with $ |a(x)| \le \frac{C}{|x|+1}$ and $ |a| \in L^N(\IR^N)$.  Then any positive stable solution $u$ of (\ref{eq}) satisfies
\begin{equation} \label{atinf}
 \lim_{|x| \rightarrow \infty} |x|^\frac{2}{p-1} u(x)=0.
 \end{equation}
\end{thm}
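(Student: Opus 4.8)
The plan is to argue by contradiction through a blow-up at infinity, exploiting the scale invariance of the $L^N$ norm of $a$ to annihilate the drift in the limit and then appealing to the Liouville-type classification of stable solutions of (\ref{lane_class}) for $p<p_c$ (the case $a\equiv0$, as in \cite{farina}). The starting point is a usable integral form of stability. Since $a$ is divergence free, the supersolution $E>0$ from (\ref{linearized}) yields, after multiplying by $\phi^2/E$, a Cauchy--Schwarz completion of the square in $b=\nabla E/E$, and the identity $\int a\cdot\nabla\phi\,\phi=-\tfrac12\int\phi^2\operatorname{div}a=0$, the generalized Hardy inequality of \cite{craig}:
\[
\int p u^{p-1}\phi^2 \le \int |\nabla\phi|^2 + \frac14\int |a|^2\phi^2 \qquad \forall\,\phi\in C_c^\infty(\IR^N).
\]
This is the advection analogue of (\ref{stable}), and the term $\tfrac14\int|a|^2\phi^2$ is precisely what the hypothesis $|a|\in L^N(\IR^N)$ will later control.

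Next I would record the Farina-type energy estimates. Testing (\ref{eq}) against $u^{1+2t}\zeta^2$ and combining with the inequality above applied to $\phi=u^{(1+2t)/2}\zeta$, the extra drift term is rewritten via $\operatorname{div}a=0$ as $-\tfrac{1}{2+2t}\int u^{2+2t}\,a\cdot\nabla(\zeta^2)$ and absorbed using $|a|\le C(|x|+1)^{-1}$ and $|\nabla\zeta|\le C/R$. The admissible range of $t$ is exactly what requires $p<p_c$, and one obtains, uniformly in $x_0$ and $R$,
\[
\int_{B_R(x_0)} \left( u^{p+1} + |\nabla u|^2 \right) \le C\, R^{\,N-\frac{2(p+1)}{p-1}},
\]
where the exponent $N-\tfrac{2(p+1)}{p-1}$ is positive precisely because $p>\tfrac{N+2}{N-2}$.

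Now suppose (\ref{atinf}) fails: there are $\delta>0$ and points $x_k$ with $R_k:=|x_k|\to\infty$ and $R_k^{2/(p-1)}u(x_k)\ge\delta$. Set $w_k(y):=R_k^{2/(p-1)}u(R_k y)$ and $a_k(y):=R_k\,a(R_k y)$, so that $-\Delta w_k+a_k\cdot\nabla w_k=w_k^p$ and the generalized Hardy inequality rescales with $a$ replaced by $a_k$. Writing $y_k:=x_k/R_k\in S^{N-1}$, the scale invariance of the $L^N$ norm together with $|a|\in L^N$ gives, for $r<1$,
\[
\int_{B_r(y_k)} |a_k|^N\,dy = \int_{B_{R_k r}(x_k)} |a|^N\,dz \le \int_{\{|z|\ge R_k(1-r)\}} |a|^N\,dz \longrightarrow 0,
\]
so $a_k\to0$ in $L^N_{loc}(\IR^N\setminus\{0\})$; by Hölder with the critical pairing, $\tfrac14\int|a_k|^2\phi^2\le\tfrac14\|a_k\|_{L^N(\operatorname{supp}\phi)}^2\,\|\phi\|_{L^{2N/(N-2)}}^2\to0$ for $\phi\in C_c^\infty(\IR^N\setminus\{0\})$. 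The energy estimate rescales to a uniform bound on $w_k$ on compact subsets of $\IR^N\setminus\{0\}$; using elliptic regularity at the $L^N$ borderline (De Giorgi--Nash--Moser together with $W^{2,q}$ estimates handling the drift), I would extract a subsequence with $w_k\to w$ in $C^0_{loc}(\IR^N\setminus\{0\})$, where $w\ge0$ solves $-\Delta w=w^p$ there, satisfies the stability inequality (\ref{stable}) for all $\phi\in C_c^\infty(\IR^N\setminus\{0\})$, and, since $y_k\to y_\infty$, obeys $w(y_\infty)\ge\delta$, so $w\not\equiv0$.

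The main obstacle is the final classification. Because the drift concentrates at the origin, $w$ is only known to solve the clean equation on the \emph{punctured} space, so I must rule out a nonremovable singularity. Passing the rescaled energy bound to the limit gives $\int_{B_r}w^{p+1}\le C r^{\,N-2(p+1)/(p-1)}\to0$ as $r\to0$, while the stability of $w$ excludes the scale-invariant singular solution $A|x|^{-2/(p-1)}$, which is stable only for $p\ge p_c$ --- this being the defining property of $p_c$. Together these force the point singularity to be removable, so $w$ extends to a nonnegative, nontrivial stable solution of (\ref{lane_class}) on all of $\IR^N$; for $p<p_c$ this contradicts the nonexistence of positive stable solutions, establishing (\ref{atinf}). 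I expect the two delicate points to be the uniform passage to the limit across the $L^N$ threshold and the removable-singularity argument, both of which rely essentially on $|a|\in L^N(\IR^N)$ and on $p<p_c$.
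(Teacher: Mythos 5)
Your opening inequality $\int pu^{p-1}\phi^2\le\int|\nabla\phi|^2+\tfrac14\int|a|^2\phi^2$ is correct (complete the square in $\nabla E/E$ and use $\operatorname{div}a=0$ to kill $\int\phi\,a\cdot\nabla\phi$), and the blow-up strategy is a legitimate alternative in principle, but two steps do not hold up as written. First, compactness: you pass from $\int_{B_R(x_0)}u^{p+1}\le CR^{N-2(p+1)/(p-1)}$ to $C^0_{loc}$ convergence of $w_k$ via ``elliptic regularity at the $L^N$ borderline.'' The borderline you address is the drift; the real obstruction is the zeroth-order coefficient $w_k^{p-1}$, which your estimate controls only in $L^{(p+1)/(p-1)}_{loc}$, and $\frac{p+1}{p-1}<\frac N2$ precisely when $p>\frac{N+2}{N-2}$. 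So De Giorgi--Nash--Moser and the Harnack inequality do not apply from the $L^{p+1}$ bound alone. You need the full Farina family $\int_{B_R}u^{2t+p-1}\le CR^{N-2(2t+p-1)/(p-1)}$ for $t$ up to $t_+(p)=p+\sqrt{p^2-p}$; the hypothesis $p<p_c$ is exactly what lets you choose $t$ with $2t+p-1>\frac N2(p-1)$, i.e. $u_k^{p-1}$ bounded in $L^q_{loc}$ with $q>\frac N2$. This is the exponent the paper actually works with.

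Second, and more seriously, the removable-singularity step is asserted rather than proved. The bound $\int_{B_r}w^{p+1}\le Cr^{N-2(p+1)/(p-1)}\to0$ only restates local integrability and is far below the critical removability threshold $w\in L^{N(p-1)/2}_{loc}$; and ``stability excludes $A|x|^{-2/(p-1)}$'' rules out one particular singular solution, not an arbitrary non-removable singularity. Turning this into removability requires an asymptotic classification of isolated singularities of $-\Delta w=w^p$ in the supercritical range, which you neither prove nor cite; nor can you rerun the capacity argument on $\IR^N\setminus\{0\}$ with a two-sided cutoff, since the exponent $N-\frac{2(2t+p-1)}{p-1}$ must be negative to control the error at infinity but positive to control it at the puncture, so no single $t$ serves both ends. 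The paper avoids all of this by never forming a limit equation: it uses $|a|\in L^N$ through H\"older with the exact exponent $\tau'=N/2$ (forced by the choice $t=\frac{(p-1)(N-2)}{4}$) to prove the global bound $u\in L^{N(p-1)/2}(\IR^N)$, chooses the balls $B(x_k,|x_k|/4)$ disjoint so that $\int_{B(x_k,r_k)}u^{N(p-1)/2}\to0$, and converts this into $\sup_{B_{1/2}}u_k\to0$ via the Harnack inequality of \cite{harnack} for divergence-free drifts. If you wish to keep the blow-up route you must supply the two missing ingredients above; as it stands the argument is incomplete.
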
  The approach to solve Theorem \ref{smallness} will be to combine the methods used in \cite{farina} with the techniques from \cite{advection} which relied on generalized Hardy inequalities from \cite{craig}.    The same approach will be used in the proof of Theorem \ref{decay} with an added scaling argument.

Our final result gives an existence result.
\begin{thm} \label{existence} \begin{enumerate} \item Suppose $ N \ge 4$, $ p > \frac{N+1}{N-3}$ and $ a(x)$ is some  smooth  vector field with $ |a(x)|  \le \frac{C}{|x|+1}$.  If $ 0<C$ is sufficiently small there exists a positive solution  of (\ref{eq}).

\item  Suppose $ N \ge 11$, $ p > p_c$ and let $ a(x)$ denote some  smooth divergence free vector field with $ |a(x)|  \le \frac{C}{|x|+1}$.  For $ 0 <C$ sufficiently small (\ref{eq}) has a positive stable solution.

\end{enumerate}
\end{thm}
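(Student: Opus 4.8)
The plan is to treat both statements as perturbations of the autonomous problem $-\Delta u = u^p$ in $\IR^N$, using the smallness of $a$ to run a fixed point scheme anchored at an explicit solution of the unperturbed equation. Recall the singular solution $u_s(x)=L|x|^{-\frac{2}{p-1}}$, with $L^{p-1}=\frac{2}{p-1}\left(N-2-\frac{2}{p-1}\right)>0$ (positive precisely in the supercritical range), and the regular radial solution $U$ of $-\Delta U=U^p$, which exists for supercritical $p$; since $U<u_s$ pointwise and, for $p\ge p_c$ with $N\ge 11$, $p\,u_s^{p-1}=\Lambda|x|^{-2}$ with $\Lambda=pL^{p-1}\le\frac{(N-2)^2}{4}$, the strict Hardy inequality shows $U$ is stable. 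I would look for a solution of (\ref{eq}) in the form $u=U+v$, where $v$ solves
\[
-\Delta v - p U^{p-1} v \;=\; Q(v) - a\cdot\nabla U - a\cdot\nabla v, \qquad Q(v)=(U+v)^p-U^p-pU^{p-1}v .
\]
The point is that $a\cdot\nabla U$ is of size $O(C)$ by the decay hypothesis $|a|\le C/(|x|+1)$ (it decays like $|x|^{-\frac{2}{p-1}-2}$ at infinity), while $Q(v)$ and $a\cdot\nabla v$ are of higher order in $v$; this is what opens the door to a contraction once the linear operator $L_0:=-\Delta - pU^{p-1}$ is inverted.

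The core analytic step is to invert $L_0$ (or $L_0+a\cdot\nabla$) in a weighted space adapted to the decay $|x|^{-\frac{2}{p-1}}$ of $U$, say a weighted Hölder or $L^\infty$ space, with operator norm uniform for small $C$. The relevant model is $-\Delta-\Lambda|x|^{-2}$, whose radial homogeneous solutions are $|x|^{-\gamma}$ with $\gamma(N-2-\gamma)=\Lambda$, i.e. $\gamma_\pm=\frac{N-2}{2}\pm\sqrt{\frac{(N-2)^2}{4}-\Lambda}$. In part (2) the condition $p>p_c$ is exactly $\Lambda<\frac{(N-2)^2}{4}$, so these exponents are real and separated and $L_0$ inverts with a genuine gap (equivalently the strict Hardy inequality is available); in part (1) the restriction $p>\frac{N+1}{N-3}$ is the decay/integrability threshold that keeps both the source $a\cdot\nabla U$ and the remainder $Q(v)$ inside the weighted space and keeps the linear solution operator bounded, even though no spectral gap is present. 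With invertibility in hand I would write the equation as a fixed point $v=L_0^{-1}\big(Q(v)-a\cdot\nabla U-a\cdot\nabla v\big)=:\mathcal T(v)$ and apply the Banach fixed point theorem on a ball of radius $O(C)$; smallness of $C$ makes $\mathcal T$ a contraction. The fixed point is small relative to $U$, so $u=U+v>0$ is the desired (smooth) positive solution.

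For the stability claim in part (2) it remains to exhibit a smooth positive $E$ with $-\Delta E+a\cdot\nabla E\ge p\,u^{p-1}E$. I would take $E$ to be a smoothed version of the Hardy profile $|x|^{-\gamma}$ with $\gamma=\frac{N-2}{2}$ (constant near the origin, where $pU^{p-1}$ is bounded, and $\sim|x|^{-\frac{N-2}{2}}$ at infinity). For the unperturbed inequality this yields a strict margin $\frac{(N-2)^2}{4}-\Lambda>0$ coming from $p>p_c$, and since $u$ is close to $U$ the potential $p\,u^{p-1}$ differs from $pU^{p-1}$ by a term absorbed into this margin. The advection term $a\cdot\nabla E$ is then controlled by testing the associated quadratic form and integrating the first–order term by parts, using that $a$ is \emph{divergence free}; the generalized Hardy inequality of \cite{craig} bounds the remaining contribution by the available gap once $C$ is small. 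This is precisely the mechanism behind the nonexistence results, run here in reverse, and it is the reason the divergence–free hypothesis is imposed in part (2) but not in part (1).

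The main obstacle is the invertibility step for the non–self–adjoint, singular–potential operator $L_0+a\cdot\nabla$. Two difficulties must be confronted. First, $L_0$ has a nontrivial kernel forced by the scaling and translation invariance of $-\Delta U=U^p$ (the elements $x\cdot\nabla U+\frac{2}{p-1}U$ and $\partial_{x_i}U$), which must be removed by a Lyapunov–Schmidt reduction adjusting the scale and center, or excluded by a careful choice of weight; the solvability conditions this produces are where the smallness and the structure of $a$ are consumed. Second, in part (1) the spectral gap is absent — indeed the indicial exponents become complex once $p<p_c$ — so invertibility cannot be read off from coercivity and must instead be extracted purely from the decay threshold $p>\frac{N+1}{N-3}$. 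In part (2) this second difficulty disappears but is replaced by the task of transferring stability across a genuinely non–variational perturbation, which is exactly where the divergence–free condition together with the generalized Hardy inequality become indispensable.
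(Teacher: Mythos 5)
Your overall strategy for part (1) --- perturb off the regular radial solution, invert the linearized operator in weighted spaces, and run a contraction using the smallness of $a$ --- is exactly the paper's, but you leave the decisive step unproved. You correctly identify that everything hinges on inverting $L_0=-\Delta-pw^{p-1}$, and then you propose to handle the kernel coming from translations and dilations by a Lyapunov--Schmidt reduction whose ``solvability conditions \ldots are where the smallness and the structure of $a$ are consumed.'' That is not a viable route here: $a$ is an essentially arbitrary small vector field, so there is no parameter available to kill the projections onto a finite-dimensional cokernel, and no reason the resulting orthogonality conditions would hold. The paper avoids this entirely by importing the linear theory of D\'avila--del Pino--Musso \cite{davila}: for $N\ge 4$ and $p>\frac{N+1}{N-3}$ the map $L:\tilde X_\sigma\to Y_\sigma$ is \emph{surjective with a bounded right inverse} in these weighted sup-norm spaces, with no orthogonality conditions --- the kernel elements do not obstruct solvability because of the choice of weights. (The paper then upgrades this to a space $X_\sigma$ controlling $|\nabla\phi|$ as well, via interior elliptic estimates on rescaled balls, which is needed to absorb the term $a\cdot\nabla\phi$; your proposal does not address how the gradient of the unknown is controlled in the fixed-point space.) Without this input your argument has no content at its core.

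For part (2) you take a genuinely different route to stability --- an explicit pointwise supersolution $E$ modeled on the Hardy profile $|x|^{-(N-2)/2}$ --- and it is incomplete in two ways. First, near the origin $w$ is bounded ($w(0)=1$), the Hardy profile is flat there, and $-\Delta E\approx 0$ cannot dominate $pu^{p-1}E>0$; producing a positive supersolution on the inner region is equivalent to positivity of a principal eigenvalue there, which is the very thing to be proved. Second, you say the advection term is controlled ``by testing the associated quadratic form and integrating by parts using that $a$ is divergence free,'' but a pointwise supersolution cannot be verified by testing a quadratic form; this conflates the two formulations. The paper's argument instead works with the first eigenvalue $\mu_{m,R}$ of $-\Delta+a\cdot\nabla-pu_R^{p-1}$ on expanding balls $B_m$: the divergence-free condition gives the exact identity $\int|\nabla E|^2=\int pu_R^{p-1}E^2+\mu_{m,R}$, the improved stability inequality $(p+\E)\int w^{p-1}\phi^2\le\int|\nabla\phi|^2$ (which follows from the pointwise comparison $w\le v$ of Lemma \ref{compar}, itself proved by a minimality argument, not just from Hardy applied to $w$ directly) forces $\mu_{m,R}\ge 0$ for $R$ small, and then a Harnack/compactness limit as $m\to\infty$ produces the positive solution $E$ of the linearized equation on all of $\IR^N$. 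That limiting construction, together with the verification that $\mu_m\to 0$ via the generalized Hardy inequality of \cite{craig}, is absent from your sketch.
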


 The idea of the proof will be to look for a solution $u$ as a perturbation of the positive radial solution $w$ of $-\Delta w=w^p$ in $ \IR^N$ with $ w(0)=1$.  See the beginning of Section \ref{existsec} for details on $w$.
    The framework we will use to prove the existence of a positive solution will be the approach developed in \cite{davila}.    Their interest was in the existence of positive solutions of $ -\Delta u = u^p$ in $ \Omega \subset \IR^N $ an exterior domain with zero Dirichlet boundary conditions.  \\

  \noindent
  \textbf{Open Problem.}  It would be interesting to see if these smallness conditions on $a(x)$ can be removed, possibly at the expense of  adding some additional decay requirements.

\section{Nonexistence proofs}

\begin{remark}  A computation shows that $ p <p_c$ is equivalent to the condition
\begin{equation} \label{cond}
 \frac{N}{2} < 1 + \frac{2p}{p-1} + \frac{2}{p-1} \sqrt{p^2-p}.
\end{equation}  For our nonexistence results it will be easier to deal with (\ref{cond}).
\end{remark}

Theorem \ref{smallness} and Theorem \ref{decay} will depend on the following energy estimate,  which we state for a general domain.

\begin{prop} \label{prop_1} Suppose $u$ is a smooth  positive  stable  solution of (\ref{eq}) and $a(x)$ is smooth divergence free vector field.  Then for all $ 1 \le T$, $0 < \beta <1$, $0<\E$, $0<\delta$, $ \frac{1}{2}<t$ and $ 0 \le \psi \in C_c^\infty(\Omega)$ we have

\begin{eqnarray} \label{first}  \left( \beta p - \frac{Tt^2}{2t-1} \right) \int u^{2t+p-1} \psi^2 &+& \beta (1-\beta-\E) \int \frac{ | \nabla E|^2}{E^2} u^{2t} \psi^2 \nonumber \\
&&+ (T-1) \int | \nabla (u^t \psi)|^2 \nonumber \\
& \le & \left( \frac{\beta}{4 \E} + \frac{T t \delta}{2t-1} \right) \int | a|^2 u^{2t} \psi^2  \nonumber \\
&&+ \left( T + \frac{Tt}{4 \delta(2t-1)} \right) \int u^{2t} | \nabla \psi|^2 \nonumber \\
&&+ \frac{T |t-1|}{2(2t-1)} \int u^{2t} | \Delta \psi^2|.
\end{eqnarray}
\end{prop}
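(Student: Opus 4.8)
The plan is to play two integral relations against each other: the quadratic form encoded by the stability function $E$, and the identity obtained by testing the equation itself against $u^{2t-1}\psi^2$. Both are then combined using Young's inequality, and the free parameters $T,\beta,\E,\delta$ are precisely the weights and splitting constants that surface in the statement.

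First I would convert the stability condition into a usable quadratic form. Since $E>0$ is smooth and $\psi$ has compact support, $\phi^2/E$ with $\phi=u^t\psi$ is an admissible test function; multiplying (\ref{linearized}) by $\phi^2/E$, integrating, and integrating the term $\int\frac{\phi^2}{E}(-\Delta E)$ by parts gives, with $V:=\nabla E/E$,
\[
p\int u^{2t+p-1}\psi^2+\int |V|^2u^{2t}\psi^2\le 2\int \phi\, V\cdot\nabla\phi+\int \phi^2\, a\cdot V .
\]
Multiplying by $\beta$ and applying Young's inequality twice — once to $a\cdot V$ with weight $\E$ (producing $\frac{\beta}{4\E}\int|a|^2u^{2t}\psi^2$ and $\beta\E\int|V|^2u^{2t}\psi^2$) and once to the cross term $2\beta\int\phi V\cdot\nabla\phi\le \beta^2\int|V|^2u^{2t}\psi^2+\int|\nabla(u^t\psi)|^2$ — yields the preliminary inequality
\[
\beta p\int u^{2t+p-1}\psi^2+\beta(1-\beta-\E)\int\frac{|\nabla E|^2}{E^2}u^{2t}\psi^2\le \int|\nabla(u^t\psi)|^2+\frac{\beta}{4\E}\int|a|^2u^{2t}\psi^2 .
\]
This already supplies two of the three left-hand terms together with the $\frac{\beta}{4\E}$ term; the coefficient $\beta(1-\beta-\E)$ is exactly what is left of the $|V|^2$ integral after the two splittings.

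Next I would test the equation with $u^{2t-1}\psi^2$. Integrating $\int(-\Delta u)u^{2t-1}\psi^2$ by parts and using $\nabla u^t=tu^{t-1}\nabla u$, while the advection term $\int(a\cdot\nabla u)u^{2t-1}\psi^2=\frac{1}{2t}\int a\cdot\nabla(u^{2t})\psi^2$ is integrated by parts using the divergence-free hypothesis (this is the only place $\operatorname{div}a=0$ enters, and it is what kills the otherwise-problematic $\int(\operatorname{div}a)u^{2t}\psi^2$ term), gives
\[
\frac{2t-1}{t^2}\int|\nabla u^t|^2\psi^2=\int u^{2t+p-1}\psi^2+\frac{1}{2t}\int u^{2t}\Delta\psi^2+\frac{1}{2t}\int u^{2t}a\cdot\nabla\psi^2 .
\]
The key algebraic step is to replace $\int|\nabla u^t|^2\psi^2$ by $\int|\nabla(u^t\psi)|^2$: because the cross term satisfies $2\int u^t\psi\,\nabla u^t\cdot\nabla\psi=-\tfrac12\int u^{2t}\Delta\psi^2$, this substitution collapses the coefficient of the $\Delta\psi^2$ term to $\frac{t-1}{2(2t-1)}$, which is the source of the $|t-1|$ in the statement. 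Multiplying the resulting identity for $\int|\nabla(u^t\psi)|^2$ by $T$ and adding it to the preliminary inequality converts the right-hand $\int|\nabla(u^t\psi)|^2$ into $(T-1)\int|\nabla(u^t\psi)|^2$ on the left and moves $\frac{Tt^2}{2t-1}\int u^{2t+p-1}\psi^2$ to the left, producing the coefficient $\beta p-\frac{Tt^2}{2t-1}$.

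Finally I would clean up the surviving right-hand terms: the $\Delta\psi^2$ term is bounded in absolute value, and the advection–cutoff cross term $\frac{Tt}{2(2t-1)}\int u^{2t}a\cdot\nabla\psi^2$ is split by Young's inequality with weight $2\delta$ (that is, $2XY\le 2\delta X^2+\frac{1}{2\delta}Y^2$), which produces exactly the coefficients $\frac{Tt\delta}{2t-1}$ on $\int|a|^2u^{2t}\psi^2$ and $\frac{Tt}{4\delta(2t-1)}$ on $\int u^{2t}|\nabla\psi|^2$, the latter combining with the $T\int u^{2t}|\nabla\psi|^2$ already present. The main obstacle is bookkeeping rather than conceptual: one must choose the four Young weights consistently (weight $\beta$ in the stability cross term, $\E$ in $a\cdot V$, and $2\delta$ in the advection–cutoff term) so that every constant lands exactly as written, invoke $\operatorname{div}a=0$ at precisely the right integration by parts, and use $t>\tfrac12$ and $T\ge1$ to control the denominators $2t-1$ and the sign of $T-1$.
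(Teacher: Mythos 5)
Your proof is correct and follows essentially the same route as the paper: test the linearized inequality with $\phi=u^t\psi$ (you derive the generalized Hardy inequality of \cite{craig} inline by testing with $\phi^2/E$, rather than quoting it), test the equation with $u^{2t-1}\psi^2$ to trade the $\int u^{2t-2}|\nabla u|^2\psi^2$ term for $\int|\nabla(u^t\psi)|^2$, and distribute the Young weights $\beta,\E,\delta$ exactly as in the statement. All the coefficients, including $\frac{T|t-1|}{2(2t-1)}$ from the $\Delta\psi^2$ term and the $\frac{Tt\delta}{2t-1}$, $\frac{Tt}{4\delta(2t-1)}$ pair from the advection--cutoff term, check out.
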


Define the following parameters
\[ t_-(p)= p - \sqrt{p^2-p} \quad \mbox{and} \quad  \quad t_+(p)= p+ \sqrt{p^2-p}.\] A computation shows that for $ t_-(p) <t<t_+(p)$ we have $ p- \frac{t^2}{2t-1}>0$.  This restriction on $t$ will be related to the restrictions on $t$ we must impose if one wants to obtain an estimate from Proposition \ref{prop_1}.

\noindent
\textbf{Proof of Proposition \ref{prop_1}.}  Suppose $u$ is a smooth positive  stable solution  of (\ref{eq}) in $\Omega$ and let $E>0$ satisfy (\ref{linearized}).  From   \cite{craig} we have the following generalized Hardy inequality
 \begin{equation} \label{hardy_mine}
 \beta \int \frac{-\Delta E}{E} \phi^2 + (\beta - \beta^2) \int \frac{| \nabla E|^2}{E^2} \phi^2 \le \int | \nabla \phi|^2, \qquad \forall \phi \in C_c^\infty(\Omega),
     \end{equation} for all $  \beta \IR$.   Adding  $  T \int | \nabla \phi|^2$ to both sides of the inequality,  using the fact that $E$ satisfies (\ref{linearized}) and taking $\phi= u^t \psi$ where $ \psi \in C_c^\infty(\Omega)$ gives
\begin{eqnarray*}
 \beta p \int u^{p-1} u^{2t+p-1} \psi^2 - \beta \int \frac{a \cdot \nabla E}{E} u^{2t} \psi^2 && \\
 + (\beta - \beta^2) \int \frac{| \nabla E|^2}{E^2} u^{2t} \psi^2 + (T-1) \int | \nabla (u^t \psi)|^2 && \\
 && \le T \int | \nabla (u^t \psi)|^2.
 \end{eqnarray*}  Note that the right side expands as 
 \[ T t^2 \int u^{2t-2} | \nabla u|^2 \psi^2 + 2tT \int u^{2t-1} \psi \nabla u \cdot \nabla \psi + T \int u^{2t} | \nabla \psi|^2.\]  We now wish to eliminate the term $ \int u^{2t-2} | \nabla u|^2 \psi^2$ from the inequality.  To do this we  multiply (\ref{eq}) by $ u^{2t-1} \psi^2$ and integrate over $ \Omega$ to arrive at
 \begin{eqnarray*}
  (2t-1) \int u^{2t-2} | \nabla u|^2 \psi^2 &=& \int u^{p+2t-1} \psi^2 - \int a\cdot \nabla u u^{2t-1} \psi^2 \\
  && - 2 \int \nabla u \cdot \nabla \psi u^{2t-1} \psi.
  \end{eqnarray*}  Using this equality we replace the desired term in the inequality to arrive at an inequality of the form
 \begin{eqnarray} \label{arr}
 \left( \beta p - \frac{T t^2}{2t-1} \right) \int u^{2t+p-1} \psi^2 + \beta(1-\beta) \int \frac{ | \nabla E|^2}{E^2} u^{2t} \psi^2 && \nonumber \\
 +(T-1) \int | \nabla (u^t \psi)|^2 & \le & T \int u^{2t} | \nabla \psi|^2 \nonumber  \\
 && + \sum_{k=1}^3 I_k
 \end{eqnarray}  where
 \[ I_1 = \left( 2Tt - \frac{2T t^2}{2t-1} \right) \int u^{2t-1} \psi \nabla u \cdot \nabla \psi,\]
 \[ I_2 = - \frac{T t^2}{2t-1} \int a(x) \cdot \nabla u u^{2t-1} \psi^2,\]
 \[ I_3 = \beta \int \frac{ a(x) \cdot \nabla E}{E} u^{2t} \psi^2.\]
 An integration by parts shows that
 \[ I_1= \frac{T(1-t)}{2(2t-1)} \int u^{2t} \Delta (\psi^2).\] An integration by parts shows that
 \[ |I_2| \le \frac{Tt}{2t-1} \int |a| \psi | \nabla \psi| u^{2t},\] and an application of Young's inequality shows this is less than or equal
 \[ \frac{Tt \delta}{2t-1} \int |a|^2 \psi^2 u^{2t} + \frac{Tt}{(2t-1) 4 \delta} | \nabla \psi|^2 u^{2t}.\]  An application of Young's inequality shows that
 \[ |I_3| \le \beta \E \int \frac{ | \nabla E|^2}{E^2} u^{2t} \psi^2 + \frac{\beta}{4 \E} \int |a|^2 u^{2t} \psi^2.\]   Using these upper bounds in (\ref{arr}) and regrouping gives the desired result.

\hfill $\Box$

\noindent
\textbf{Proof of Theorem \ref{smallness}.} We assume that $u$ is a positive stable solution of (\ref{eq}). Firstly note that
\[ \int |a|^2 u^{2t} \psi^2 \le C^2 \int \frac{u^{2t} \psi^2}{|x|^2},\] after considering the conditions on $a$.  Also note by Hardy's inequality we have
\[ \int | \nabla (u^t \psi)|^2 \ge C_N \int \frac{u^{2t} \psi^2}{|x|^2},\]  where $ C_N= \frac{(N-2)^2}{4}$.   Putting these into (\ref{first}) gives
\begin{eqnarray}
 \label{second}  \left( \beta p - \frac{Tt^2}{2t-1} \right) \int u^{2t+p-1} \psi^2 && \nonumber  \\
 +  \beta (1-\beta-\E) \int \frac{ | \nabla E|^2}{E^2} u^{2t} \psi^2 \nonumber  \\
+ C_1 \int \frac{u^{2t} \psi^2}{|x|^2}
& \le & C_2 \int u^{2t} \left( | \nabla \psi|^2 + | \Delta (\psi^2)| \right)
\end{eqnarray} where
\[ C_1= (T-1)C_N - C^2 \left( \frac{\beta}{4 \E} + \frac{T t \delta}{2t-1} \right),\] and $ C_2= C_2(T,t,\delta)$.   Note that for each  $  t_-(p) <t< t_+(p)$ we have $  \beta p - \frac{Tt^2}{2t-1}>0$ provided $ \beta <1$ and $ T>1$ are chosen sufficiently close to $1$.  We now pick $ \E>0$ small enough such that $ 1-\beta -\E >0$.  We now assume $C>0$ is sufficiently small such that $ C_1 \ge 0$.  We then arrive at an estimate of the form
 \begin{equation} \label{ten}
 \left( \beta p - \frac{Tt^2}{2t-1} \right) \int u^{2t+p-1} \psi^2 \le C_2 \int u^{2t} \left( | \nabla \psi|^2 + | \Delta (\psi^2)| \right),
 \end{equation} for all $ \psi \in C_c^\infty(\IR^N)$.  We now assume that $ \phi$ is a smooth cut-off function with, $ 0 \le \phi \le 1$,   $ \phi=1$ in $ B_R$ and compactly supported in $ B_{2R}$ such that $ | \nabla \phi| \le \frac{C}{R}$ and $ |\Delta \phi| \le \frac{C}{R^2}$ where $ C$ is  independent    of $R$.  Putting $\psi=\phi^m$  where $ m$ is a large integer into (\ref{ten}) gives
 \[ \left( \beta p - \frac{Tt^2}{2t-1} \right) \int u^{2t+p-1} \phi^{2m} \le C_2 C_m \int u^{2t} \phi^{2m-2} \left( | \nabla \phi|^2 + |\Delta \phi| \right),\] where $C_m$ depends only on $m$.  We now apply H\"older's inequality to see the right hand side of this inequality is bounded above by
 \[  C_2 C_m \left( \int u^{2t+p-1} \phi^\frac{(m-1) (2t+p-1)}{t} dx \right)^\frac{2t}{2t+p-1} \left( \int ( | \nabla \phi|^2 + | \Delta \phi|)^\frac{2t+p-1}{p-1} dx \right)^\frac{p-1}{2t+p-1}.\]  Now note that for sufficiently large $m$ we have that $\frac{(m-1) (2t+p-1)}{t} > 2m$ and hence we can replace the first term on the right hand side of the inequality with 
 \[\left( \int u^{2t+p-1} \phi^{2m} dx \right)^\frac{2t}{2t+p-1},\] which allows one to cancel terms to arrive at 
 
 \[ \left( \beta p - \frac{Tt^2}{2t-1} \right)^\frac{2t+p-1}{p-1} \int  u^{2t+p-1} \phi^{2m}  \le \tilde{C}_m \int \left( | \nabla \phi|^2 + | \Delta \phi| \right)^\frac{2t+p-1}{p-1}.\] We now take into account the support of $ \phi$ and how $ \phi$ scales to arrive at
 \[ \int_{B_R} u^{2t+p-1} \le C_0 R^{ N-2- \frac{2(2t+p-1)}{p-1}},\] where $C_0$ depends on the various parameters but is independent of $ R$.   Now provided $N-2- \frac{2(2t+p-1)}{p-1}<0$ we can send $ R \rightarrow \infty$ to arrive at a contradiction.   Now note we can pick a $t \in (t_-(p),t_+(p))$ such that this exponent is negative provided
 \[ \frac{N(p-1)}{2} < 2 \left( p + \sqrt{p^2-p} \right)+p-1,\] which is precisely (\ref{cond}).

\hfill $ \Box$

\textbf{Proof of Theorem \ref{decay}.} Suppose $ 0 < u$ is a smooth stable solution of (\ref{eq})  and $E>0$ solves (\ref{linearized}).
  Let $ |x_k | \rightarrow \infty$ and set $ r_k:= \frac{|x_k|}{4}$.  By passing to a subsequence we can assume that $ \{ B(x_k,r_k):  k \ge 1 \}$ is a disjoint family of balls.  We now define the rescaled functions
  \[ u_k(x)= r_k^\frac{2}{p-1} u(x_k + r_k x), \qquad a_k(x)=r_k a(x_k + r_k x), \quad E_k(x)= E(x_k+r_k x),\]  and we restrict $|x|<2$.   Then  equation (\ref{eq}) and (\ref{linearized}) are satisfied on  $B_2$ with $ u_k,a_k,E_k$ replacing $u,a,E$.
  Note that $ a_k(x)$ is a sequence of smooth divergence free vector fields which satisfy the  bound $ |a_k(x)| \le C$ for all $ |x| <2$.   From this we see the term involving $a_k$ in (\ref{first}) will be a lower order term as far as powers of $u$ are concerned and hence will cause no issues. 
    With the conditions on $N$ and $p$ there is some $  t_-(p)<t<t_+(p)$  such that  $2t+p-1 > \frac{N}{2}(p-1)>0$ and by taking $T=1$ (we can take $T=1$ since the advection term is lower order) and $ \beta<1$  sufficiently close to $1$ we can assume $\beta p - \frac{ t^2}{2t-1}>0$.
 Let $ 0 \le \phi \le 1$ be compactly supported in $B_2$ with $ \phi=1$ on $B_1$ and put $ \psi=\phi^m$, where $m$ a large integer, into (\ref{first}) where now $u,a,E$ are given by $ u_k,a_k,E_k$.   Arguing as in the proof of Theorem \ref{smallness} one can obtain a bound of the form
 \[ \int_{B_1} u_k^{2t+p-1}  \le C_0 ,\] where $ C_0$ depends on the various parameters but is independent of $k$.  
    Now note that $u_k>0$ is a sequence of smooth positive solutions of
  \[ -\Delta u_k + a_k (x) \cdot \nabla u_k = C_k(x) u_k \qquad \mbox{ in } B_2,\] where  $ C_k(x)=u_k^{p-1}$.   The above integral estimate shows that $C_k$ is bounded in $L^q(B_1)$ for some $ q> \frac{N}{2}$.
    We can now apply a  Harnack inequality from \cite{harnack} to see that
   \begin{equation} \label{second}
   \sup_{B_\frac{1}{2}} u_k \le C \inf_{B_{\frac{1}{2}}} u_k.
   \end{equation}
   If we can show that  $\inf_{B_\frac{1}{2}} u_k \rightarrow 0$ then one has  $ \sup_{B_\frac{1}{2}} \rightarrow 0$ and in particular this gives
   \[ |x_k|^\frac{2}{p-1} u(x_k) \le 4^\frac{2}{p-1} \sup_{B_\frac{1}{2}} u_k \rightarrow 0\] which gives us the desired decay estimate.     To show  $\inf_{B_\frac{1}{2}} u_k \rightarrow 0$ we will show  \[ \int_{B_1} u_k^\frac{(p-1)N}{2}  \rightarrow 0.\]
    Using a change of variables shows that
   \[ \int_{B_1} u_k^\frac{(p-1)N}{2} = \int_{B(x_k,r_k)} u^\frac{(p-1)N}{2},\] and if we show that $ u \in L^\frac{(p-1)N}{2}(\IR^N)$ then we'd have the desired result since
   \[ \int_{\IR^N} u^\frac{(p-1)N}{2} \ge \sum_{k=1}^\infty \int_{B(x_k,r_k)} u^\frac{(p-1)N}{2}. \]
Towards this we now set $ t= \frac{(p-1)(N-2)}{4}$ and note that the condition on $N$ and $p$  imply that  $ t_-(p)<t<t_+(p)$.  We now pick $ \beta <1$ but sufficiently close such that $ \beta p - \frac{t^2}{2t-1}>0$ and pick $ \E>0$ sufficiently small such that $ 1-\beta -\E>0$.   Let $ \phi$ be the smooth cut-off function from the proof of Theorem \ref{smallness},  which is equal to $1$ in $B_R$ and compactly supported in $B_{2R}$.
  We now put $ \psi=\phi^m$, where $m$ is a large integer, into (\ref{first})  taking $T=1$,  to arrive at inequality of the form
   \begin{equation} \label{hundred}
   \int u^{2t+p-1} \phi^{2m} \le C_0 \int |a|^2 u^{2t} \phi^{2m} + C_0 \int u^{2t} \phi^{2m-2} \left( | \nabla \phi|^2+ |\Delta \phi| \right).
     \end{equation}
     We now let $ \tau$ be such that $ 2 t \tau = 2t+p-1$ and let $ \tau'$ denote the conjugate index of $ \tau$.  Applying H\"older's inequality to the right hand side of (\ref{hundred}) and arguing as in the proof of Theorem \ref{smallness} we arrive at an inequality, for sufficiently large $m$,   of the form
     \[ \int u^{2t+p-1} \phi^{2m} \le C_0 \int_{B_{2R}} |a|^{2 \tau'} + C_0 \int \left( | \nabla \phi|^2 + | \Delta \phi| \right)^{\tau'},\] where $C_0$ is a constant which depends on the various parameters but is independent of $ R$.   A computation shows that $ \tau'= \frac{N}{2}$ and $ 2t+p-1= \frac{N}{2}(p-1)$.  Using these explicit values and the scaling of $ \phi$ we arrive at
     \[ \int_{B_R} u^{ \frac{N(p-1)}{2}} \le C_0 \int_{B_{2R}} |a|^N + C_0,\] and from this we obtain the desired bound on $u$ after recalling that $ |a| \in L^N(\IR^N)$.

\hfill $ \Box$

\section{Existence proofs} \label{existsec}

\noindent
 \textbf{The positive radial solution.}\\ For $ p > \frac{N+2}{N-2}$ let  $w=w(r)$ denote
  the positive radial decreasing solution of $ -\Delta w = w^p$ in $ \IR^N$  with $ w(0)=1$.  Asymptotics of $ w$ as $ r \rightarrow \infty$ are given by
  \[  w(r)= \beta^\frac{1}{p-1} r^\frac{-2}{p-1}(1+o(1)),\]  where
   \[ \beta=\beta(p,N)= \frac{2}{p-1} \left( N-2- \frac{2}{p-1} \right).\]
   In the case where $ p>p_c$ the refined asymptotics are given by
  \[ w(r)= \beta^\frac{1}{p-1} r^\frac{-2}{p-1} + \frac{a_1}{r^{\mu_0^-}} + o \left( \frac{1}{r^{\mu_0^-}}\right),\]
  where $ a_1 <0$ and $ \mu_0^- > \frac{2}{p-1}$; see \cite{Gui_Ni_Wang}.   \\

We begin by analysing the radial solution $w$ as defined above. Let $ v(r)=\beta^\frac{1}{p-1} r^\frac{-2}{p-1}$ where $ \beta$ is defined as above.

  \begin{lemma} \label{compar} Suppose $ p >p_c$, $v(r)=\beta^\frac{1}{p-1} r^\frac{-2}{p-1}$ and $ \beta$ is defined as in the definition of $w$.
   \begin{enumerate} \item  Then $ v \ge w$ in $ \IR^N$.
   \item There is some $ \E>0$ such that
 \begin{equation} \label{superstable}
 \int (p+\E) w^{p-1} \phi^2 \le \int | \nabla  \phi|^2 \qquad \forall \phi \in C_c^\infty(\IR^N).
 \end{equation}

   \end{enumerate}

  \end{lemma}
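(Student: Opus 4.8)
The plan is to treat the two parts in sequence, deducing the stability inequality (\ref{superstable}) from the pointwise comparison $v\ge w$ together with Hardy's inequality; implicitly $N\ge 11$, since otherwise $p>p_c$ is vacuous. Throughout write $\alpha=\frac{2}{p-1}$, so that $v=\beta^{1/(p-1)}|x|^{-\alpha}$, a direct computation gives $-\Delta v=v^p$ in $\IR^N\setminus\{0\}$, and $v^{p-1}=\beta|x|^{-2}$.

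For the first part I would pass to Emden--Fowler variables. Setting $s=\log r$ and $z(s)=w(r)/v(r)$, the equation $-\Delta w=w^p$ becomes the autonomous ODE
\[ z'' + A z' + \beta(z^p - z)=0, \qquad A=N-2-\frac{4}{p-1}, \]
for which $z\equiv 1$ corresponds to $v$. Since $p>\frac{N+2}{N-2}$ one checks $A>0$, and the boundary behaviour is $z(s)\to 0$ as $s\to-\infty$ (because $w(0)=1$ while $v$ blows up at the origin) and $z(s)\to 1$ as $s\to+\infty$ (from the stated asymptotics of $w$), with $z>0$ throughout. In the phase plane $(z,z')$ the origin is a saddle whose unstable manifold carries the trajectory of $w$ into $\{0<z<1,\ z'>0\}$, while the linearization at $(1,0)$ has characteristic roots $\frac{1}{2}\bigl(-A\pm\sqrt{A^2-4\beta(p-1)}\bigr)$. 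The crucial algebraic identity is
\[ A^2-4\beta(p-1)=(N-2)^2-4p\beta, \]
so that the hypothesis $p>p_c$ is precisely the statement that this discriminant is strictly positive; hence $(1,0)$ is a stable node rather than a spiral.

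With the node structure in hand I would rule out any overshoot past $z=1$ by exhibiting a positively invariant triangular region $\{0<z<1,\ 0<z'<m(1-z)\}$, where the slope $m$ is chosen between the two real positive numbers $\frac{1}{2}\bigl(A\mp\sqrt{A^2-4\beta(p-1)}\bigr)$. Checking invariance on the three edges reduces, after the elementary inequality $pt-t^p\le p-1$ for $0<t<1$, to the single condition $m^2-Am+\beta(p-1)\le 0$, which holds exactly for such $m$. Thus the trajectory is trapped below the line $z'=m(1-z)$ and $z$ increases monotonically to $1$ without ever reaching it, giving $z<1$, i.e. $w<v$ on $\IR^N$. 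I expect this trapping argument --- verifying invariance and confirming that the unstable manifold of the saddle enters the region --- to be the main obstacle, since it is here that the Joseph--Lundgren threshold (as opposed to mere supercriticality $p>\frac{N+2}{N-2}$) is used: for $\frac{N+2}{N-2}<p<p_c$ the node degenerates to a spiral and $w$ genuinely oscillates about $v$.

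The second part is then a short deduction. By the first part and $v^{p-1}=\beta|x|^{-2}$ we have $w^{p-1}\le\beta|x|^{-2}$, so Hardy's inequality $\int\frac{\phi^2}{|x|^2}\le\frac{4}{(N-2)^2}\int|\nabla\phi|^2$ yields
\[ \int p\,w^{p-1}\phi^2 \le p\beta\int\frac{\phi^2}{|x|^2} \le \frac{4p\beta}{(N-2)^2}\int|\nabla\phi|^2 \qquad \forall\,\phi\in C_c^\infty(\IR^N). \]
The displayed identity shows $p>p_c$ is equivalent to $p\beta<\frac{(N-2)^2}{4}$, so $\theta:=\frac{4p\beta}{(N-2)^2}<1$. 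Choosing $\E>0$ small enough that $\frac{p+\E}{p}\,\theta\le 1$ then gives $\int(p+\E)w^{p-1}\phi^2\le\int|\nabla\phi|^2$, which is (\ref{superstable}).
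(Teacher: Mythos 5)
Your proof of part 2 coincides with the paper's: both combine the pointwise bound $w\le v$ from part 1 with the identity $v^{p-1}=\beta |x|^{-2}$, Hardy's inequality, and the observation that $p>p_c$ is exactly the strict inequality $4p\beta<(N-2)^2$, leaving room for an $\varepsilon>0$. For part 1, however, you take a genuinely different route. The paper argues by contradiction on an annulus $\{r_0<r<r_1\}$ where $w$ would exceed $v$ with equality at the endpoints: the same Hardy computation shows that the singular solution $v$ is a stable solution there, and stability together with the strict convexity of $t\mapsto t^p$ forces $v$ to be the minimal solution of the Dirichlet problem on the annulus with boundary data $w$, which is incompatible with the assumed ordering. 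Your argument is instead the classical Emden--Fowler phase-plane analysis (essentially the one underlying the cited works of Wang and of Gui--Ni--Wang): pass to $z=w/v$, observe that $p>p_c$ is precisely non-negativity of the discriminant $A^2-4\beta(p-1)=(N-2)^2-4p\beta$ so that $(1,0)$ is a node rather than a spiral, and trap the heteroclinic orbit in the invariant triangle $\{0<z<1,\ 0<z'<m(1-z)\}$. Both proofs thus rest on the same algebraic fact about $p_c$, but they use it differently: yours is longer and more computational yet self-contained, and it explains why the threshold is sharp (for $\frac{N+2}{N-2}<p<p_c$ the node degenerates to a spiral and $w$ genuinely oscillates about $v$, so the conclusion fails); the paper's is shorter but imports the nontrivial lemma that a semi-stable solution of a problem with strictly convex nonlinearity is the minimal one. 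Your invariance computation is correct (the edge condition does reduce, via $pt-t^p\le p-1$ on $(0,1)$, to $m^2-Am+\beta(p-1)\le 0$), and the one point you flag but leave open --- that the unstable manifold of the saddle at the origin actually enters the triangle --- is in fact automatic: along that manifold $z'\sim\lambda_+ z\to 0$ as $z\to 0^+$ while $m(1-z)\to m>0$, so the orbit lies inside the region for all small $z$ whatever admissible $m$ is chosen.
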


  \begin{proof} 1) Note that $ v(r) >w(r)$ for large $ r$ and small $ r$.  Towards a contradiction we assume that there is $ 0<r_0 < r_1$ such that $ w(r) > v(r) $ for all $ r_0 <r<r_1$ with $ w=v$ at $ r=r_0,r_1$.   A computation shows that for $ p >p_c$ there is some $ \E>0$ such that  $ (p+\E)\beta \le \frac{(N-2)^2}{4}$ and then from Hardy's inequality we obtain
  \begin{equation} \label{thi}
   \int (p+\E)  v^{p-1} \phi^2 \le \int | \nabla \phi|^2 \qquad \forall \phi \in C_c^\infty(\IR^N).
   \end{equation}   From this we see that
  $ v$ is a stable singular solution of $ -\Delta v = v^p$ in $ \IR^N$ and in particular its a stable solution of 
  \[ -\Delta v = v^p \mbox{ in }  r_0 <r<r_1 \qquad \mbox{ with $ v=w$ on $ r=r_0,r_1$.}\]  It is possible to use the stability of $v$ to show that $v$ is the minimal solution of this equation with the given prescribed boundary conditions.   This fact relies on the strict convexity of the nonlinearity.  Noting that $w$ satisfies the same equation with the prescribed boundary conditions one must have $ v \le w$ on $ r_0 <r<r_1$ since $v$ is a minimal solution.  This gives us the desired contradiction.  \\
  2) The result is immediate after  combining the pointwise comparison between $w$ and $v$ and using (\ref{thi}).
  \end{proof}
For the remainder $ w$ always refers to the above radial solution and $L$ to the linear operator    $ L(\phi)=-\Delta \phi - p w^{p-1} \phi$.

We now define the various   function spaces.
 For $ \sigma >0$ but small, define
\[ \| \phi\|_{\tilde{X}_\sigma} := \sup_{|x| \le 1} |x|^\sigma |\phi(x)| + \sup_{|x| \ge 1} |x|^\frac{2}{p-1} | \phi(x)|,\]  and
\[ \| f\|_{Y_\sigma} := \sup_{|x| \le 1} |x|^{\sigma+2} |f(x)| + \sup_{|x| \ge 1} |x|^{\frac{2}{p-1}+2} |f(x)|.\]    Let $ \tilde{X}_\sigma$ and $ Y_\sigma$ denote the completions of $ C_c^\infty(\IR^N \backslash \{0\})$ under the appropriate norms.   \\

 The following linear estimate is from \cite{davila} and is a key starting point for their work. They also obtain results in the case of $ \frac{N+2}{N-2} <p < \frac{N+1}{N-3}$ in \cite{davila} and also in another of their works \cite{davila_fast}.   This case is harder to deal with but luckily  are main interest is in the case of $ p >p_c$ which allows us to avoid the harder case.

\begin{thm*}  \cite{davila} Suppose $ N \ge 4$ and  $ p> \frac{N+1}{N-3}$.  There exists some small $ \sigma >0$ such that for any $ f \in Y_\sigma$ there exists some $ \phi \in \tilde{X}_\sigma $ such that $L(\phi)=f$ in $ \IR^N$.   Moreover  the linear  map $ T: Y_\sigma \rightarrow \tilde{X}_\sigma$, given by $ T(f)=\phi$,  is continuous.

\end{thm*}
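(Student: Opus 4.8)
The plan is to treat $L=-\Delta-pw^{p-1}$ as a perturbation, at infinity, of the exactly solvable operator $L_v:=-\Delta-p\,v^{p-1}=-\Delta-\frac{p\beta}{|x|^2}$, where $v=\beta^{1/(p-1)}|x|^{-2/(p-1)}$ is the singular solution from Lemma \ref{compar} (recall $-\Delta v=v^p$ exactly); near the origin $w$ is smooth and bounded, so there $L$ is merely $-\Delta$ plus a bounded zeroth order term. First I would expand in spherical harmonics, writing $\phi=\sum_k\phi_k(r)Y_k$ and $f=\sum_k f_k(r)Y_k$, so that $L\phi=f$ decouples into the radial ODEs
\[ -\phi_k''-\frac{N-1}{r}\phi_k'+\frac{\lambda_k}{r^2}\phi_k-pw^{p-1}\phi_k=f_k,\qquad \lambda_k=k(k+N-2).\]
The weights built into $\tilde{X}_\sigma$ and $Y_\sigma$ are exactly matched to this operator: a function decaying like $r^{-2/(p-1)}$ at infinity and blowing up at most like $r^{-\sigma}$ at the origin is produced by a forcing two powers smaller, which is precisely the content of the two norms.

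The heart of the matter is a per-mode a priori estimate via variation of parameters. For each $k$ the homogeneous operator has, at infinity, the indicial polynomial $P_k(\gamma)=\gamma(N-2-\gamma)+\lambda_k-p\beta$, with roots $\gamma^{(k)}_\pm=\frac{N-2}{2}\pm\frac12\sqrt{(N-2)^2-4(p\beta-\lambda_k)}$, and near the origin the exponents $k$ and $-(k+N-2)$ of $-\Delta$. The decisive computation is the value of the symbol at the target weight, which using the definition of $\beta$ reduces to
\[ P_k\!\left(\tfrac{2}{p-1}\right)=\lambda_k-(p-1)\beta.\]
For $k=0$ this equals $-(p-1)\beta<0$, and for $k=1$ it equals $\frac{4}{p-1}-(N-3)$, which vanishes exactly at $p=\frac{N+1}{N-3}$ and is strictly negative for $p>\frac{N+1}{N-3}$ — precisely the hypothesis. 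For $k\ge 2$ the quantity $\lambda_k-(p-1)\beta$ stays bounded away from $0$, and for large $k$ the weight $\frac{2}{p-1}$ lies strictly between $\gamma^{(k)}_-<0$ and $\gamma^{(k)}_+>N-2$, so solvability is uniform; choosing $\sigma>0$ small keeps $-\sigma$ away from every origin exponent $k\ge 0$ and $-(k+N-2)\le-(N-2)$, so the origin never resonates. With these non-resonance facts, a Green's function built from the homogeneous solutions selected by admissibility at $0$ (the regular branch) and at $\infty$ (the decaying branch) produces $\phi_k$ with
\[ \sup_{r\ge 1}r^{2/(p-1)}|\phi_k(r)|+\sup_{0<r\le 1}r^{\sigma}|\phi_k(r)|\le C_k\Big(\sup_{r\ge1}r^{2/(p-1)+2}|f_k|+\sup_{0<r\le1}r^{\sigma+2}|f_k|\Big),\]
where $C_k$ is uniform in $k$ because the gap $|\lambda_k-(p-1)\beta|$ grows.

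I would then assemble the modes and convert these radial estimates into the weighted sup bounds defining $\tilde{X}_\sigma$ and $Y_\sigma$. To sidestep summation subtleties in the sup norm, I would run the argument as an a priori estimate plus exhaustion: solve $L\phi_R=f$ on $B_R$ with zero boundary data by standard elliptic theory, use power-function barriers $|x|^{-\gamma}$ with $\gamma$ dictated by the non-resonance inequalities above (perturbed to absorb the lower-order error potential $p(v^{p-1}-w^{p-1})$ at infinity and the bounded potential near $0$) together with a comparison argument to obtain $\|\phi_R\|_{\tilde{X}_\sigma}\le C\|f\|_{Y_\sigma}$ uniformly in $R$, and finally pass to the limit $R\to\infty$ using interior Schauder estimates and Arzel\`a--Ascoli. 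Continuity of $T$ is then immediate from the uniform bound and linearity, and well-definedness follows by checking that any $\phi\in\tilde{X}_\sigma$ with $L\phi=0$ vanishes mode by mode, since the non-resonant weights forbid a homogeneous solution that is simultaneously admissible at $0$ and at $\infty$.

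I expect the main obstacle to be twofold. The conceptual crux is the mode $k=1$ non-resonance, i.e.\ the identity $P_1(2/(p-1))=\frac{4}{p-1}-(N-3)$ and the fact that $p>\frac{N+1}{N-3}$ is exactly what prevents the translation mode $\partial_{x_i}w\sim r^{-2/(p-1)-1}$ from obstructing invertibility at the weight $\frac{2}{p-1}$. The technical crux is upgrading the mode-by-mode ODE estimates to a global weighted $L^\infty$ bound uniform in the truncation $R$; this is where the barrier construction must be executed carefully, controlling the error potential $p(v^{p-1}-w^{p-1})$ at infinity and the bounded potential near the origin so that the comparison argument closes.
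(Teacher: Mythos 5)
First, a point of reference: the paper does not prove this statement at all --- it is Theorem A, imported verbatim from D\'avila--del Pino--Musso \cite{davila} --- so there is no in-paper proof to compare against. Judged on its own merits, your skeleton is the standard one and your central computation is correct: the indicial symbol at the target weight is $P_k\left(\frac{2}{p-1}\right)=\lambda_k-(p-1)\beta$, mode $k=0$ gives $-(p-1)\beta<0$, mode $k=1$ gives $\frac{4}{p-1}-(N-3)$, and the hypothesis $p>\frac{N+1}{N-3}$ is exactly the non-resonance of the translation mode. This is indeed the mechanism behind the theorem in \cite{davila}.

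There is, however, a genuine gap in how you close the argument. The kernel of $L$ in $\tilde{X}_\sigma$ is \emph{not} trivial: each $Z_i=\partial_{x_i}w$ satisfies $L(Z_i)=0$, is smooth at the origin, and decays like $|x|^{-\frac{2}{p-1}-1}$, so $|x|^{\frac{2}{p-1}}|Z_i|\to0$ and $Z_i\in\tilde{X}_\sigma$; the same holds for the scaling mode $Z_0=\frac{2}{p-1}w+x\cdot\nabla w$. Non-resonance of the weight $\frac{2}{p-1}$ does not exclude these, because they correspond to indicial roots ($\frac{2}{p-1}+1$ at mode $1$; the complex roots of real part $\frac{N-2}{2}$, or $\mu_0^{\pm}$, at mode $0$) that all decay \emph{faster} than the target weight. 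Three of your steps break on this: (i) the concluding uniqueness claim is false; (ii) at mode $k=1$ the ``regular branch at $0$'' and the ``decaying branch at $\infty$'' are the \emph{same} solution $w'(r)$, so the two-point Green's function you describe degenerates there and must be replaced by an explicit variation-of-parameters formula built from $w'$ and its reduction-of-order partner, with the free constant chosen by hand; and (iii) the uniform a priori estimate $\|\phi_R\|_{\tilde{X}_\sigma}\le C\|f\|_{Y_\sigma}$ for Dirichlet approximations cannot be obtained by barriers plus compactness, both because a compactness argument cannot rule out a limit proportional to $Z_i$, and because for $\frac{N+1}{N-3}<p<p_c$ the solution $w$ is unstable, so $L$ on $B_R$ has negative Dirichlet eigenvalues crossing zero and the truncated problems are not uniformly invertible (also, a global comparison argument is unavailable since $L$ has no maximum principle where $pw^{p-1}$ is large). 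The repair --- and what \cite{davila} actually does --- is to \emph{construct} $T$ as a specific right inverse, mode by mode, via explicit integral formulas (using $Z_0$, $Z_i$ for $k=0,1$, and barriers only for the higher modes and only in the inner and outer regions where the potential is dominated by the Hardy term); linearity and continuity of $T$ then come from the formulas rather than from uniqueness.
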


For our approach we won't work directly with $\tilde{X}_\sigma$ but instead work with a slight variant that allows us to handle the advection term.
  So towards this define the norm
 \begin{eqnarray*}
 \| \phi\|_{X_\sigma}&:=& \sup_{|x| \le 1} \left( |x|^\sigma | \phi(x)| + |x|^{\sigma +1} | \nabla \phi(x)| \right)   \\
 && + \sup_{|x|\ge 1} \left( |x|^\frac{2}{p-1} | \phi(x)| + |x|^{ \frac{2}{p-1}+1} | \nabla \phi(x)| \right)
 \end{eqnarray*} and let $ X_\sigma$ denote the completion of $ C_c^\infty(\IR^N \backslash \{0\})$ with respect to this norm.

 \begin{lemma} \label{homo} Suppose $ N \ge 4$ and   $ p> \frac{N+1}{N-3}$.     For sufficiently small $ \sigma>0$ and for all $ f \in Y_\sigma$ there exists some $ \phi \in X_\sigma$ such that $L(\phi)=f$  in $ \IR^N$. Moreover the linear  map $ T:Y_\sigma \rightarrow X_\sigma$ defined by $ T(f)=\phi$ is continuous.

 \end{lemma}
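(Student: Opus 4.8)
The plan is to use the very operator $T:Y_\sigma \to \tilde X_\sigma$ supplied by Theorem A and to promote its range from $\tilde X_\sigma$ to the smaller space $X_\sigma$; the map named $T$ in the present statement is the same operator, merely regarded as taking values in $X_\sigma$. Thus the whole lemma reduces to a single estimate: that $\phi = T(f)$ satisfies
\[ \| \phi\|_{X_\sigma} \le C \| f\|_{Y_\sigma}, \]
with $C$ independent of $f$. Since $T$ is already continuous into $\tilde X_\sigma$ and $\| \phi\|_{\tilde X_\sigma}$ controls exactly the non-gradient part of $\| \phi\|_{X_\sigma}$, everything comes down to bounding the two weighted gradient terms $\sup_{|x|\le 1}|x|^{\sigma+1}|\nabla \phi|$ and $\sup_{|x|\ge 1}|x|^{2/(p-1)+1}|\nabla \phi|$ by $C\| f\|_{Y_\sigma}$.

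The mechanism is interior elliptic gradient estimates combined with a rescaling matched to the two weights. Rewrite $L(\phi)=f$ as $-\Delta \phi = g$ with $g := p w^{p-1}\phi + f$. Fix a base point $x_0$, set $R:=|x_0|$, and on $B(x_0,R/2)$ introduce $\psi(y) := \phi(x_0 + (R/2)y)$ for $|y|<1$, so that $-\Delta_y \psi = (R/2)^2\, g(x_0 + (R/2)y)$ and $\nabla_y \psi(0) = (R/2)\nabla \phi(x_0)$. On $B(x_0,R/2)$ the weight $|x|$ is comparable to $R$, so the $\tilde X_\sigma$-bound on $\phi$, the asymptotics $w^{p-1}(r)\le C r^{-2}$ for $r\ge 1$ together with boundedness of $w$ near the origin, and the $Y_\sigma$-bound on $f$, give a uniform control of $\sup_{B_1}|\psi|$ and $\sup_{B_1}|\Delta_y \psi|$ at scale $R$. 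The standard estimate $\|\nabla \psi\|_{L^\infty(B_{1/2})} \le C(\|\psi\|_{L^\infty(B_1)} + \|\Delta \psi\|_{L^\infty(B_1)})$ then bounds $|\nabla_y\psi(0)|$, and undoing the scaling produces precisely the weighted gradient bound with one extra factor of $R^{-1}$, which is exactly the extra power of $|x|$ distinguishing $X_\sigma$ from $\tilde X_\sigma$. Concretely, for $R\ge 1$ one gets $|g|\le C(\| \phi\|_{\tilde X_\sigma}+\| f\|_{Y_\sigma})R^{-2/(p-1)-2}$, hence $|\nabla \phi(x_0)|\le C(\cdots)R^{-2/(p-1)-1}$; for $R\le 1$ the $f$-term dominates, $|g|\le C(\cdots)R^{-\sigma-2}$, hence $|\nabla \phi(x_0)|\le C(\cdots)R^{-\sigma-1}$. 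On the fixed annulus $\tfrac12\le|x_0|\le 2$ no scaling is needed and a single interior estimate suffices, so the two regimes match across $|x|=1$.

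To keep the completion issue clean I would first run this for $f\in C_c^\infty(\IR^N\setminus\{0\})$, where $\phi=T(f)$ is genuinely smooth away from the origin and the pointwise manipulations are legitimate, obtaining $\| \phi\|_{X_\sigma}\le C\| f\|_{Y_\sigma}$. Since $C_c^\infty(\IR^N\setminus\{0\})$ is dense in $Y_\sigma$ and $T$ is continuous into $\tilde X_\sigma$, applying the estimate to differences $f_n-f_m$ shows that $\phi_n := T(f_n)$ is Cauchy in $X_\sigma$; its limit lies in $X_\sigma$, coincides with $T(f)$ in $\tilde X_\sigma$, and still solves $L(\phi)=f$. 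This defines the asserted continuous map $T:Y_\sigma\to X_\sigma$.

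The main obstacle, indeed essentially the only nontrivial point, is checking that the rescaling is compatible with the weights simultaneously in both regions: that the extra unit power of $|x|$ attached to $\nabla \phi$ in the $X_\sigma$ norm is exactly the power generated by one rescaled derivative in each of the regimes $R\to 0$ and $R\to\infty$, and that the source $g=pw^{p-1}\phi+f$ never dominates at the wrong rate. Verifying this is the bookkeeping above, using $w^{p-1}\approx \beta|x|^{-2}$ at infinity; once the weights are seen to line up, the conclusion is immediate from standard elliptic regularity.
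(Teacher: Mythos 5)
Your proposal is correct and follows essentially the same route as the paper: take $\phi=T(f)\in\tilde X_\sigma$ from Theorem A, rescale around a base point at the scale of $|x_0|$, and use interior elliptic estimates on the rescaled equation $-\Delta\psi=(R/2)^2 g$ to convert the $\tilde X_\sigma$ and $Y_\sigma$ bounds into the weighted gradient bounds defining $X_\sigma$. The only (harmless) differences are cosmetic: the paper rescales by $|x_m|/4$ and phrases the interior estimate via an $L^t$ bound on $\Delta\phi_m$ with $t>N$ plus Sobolev embedding, while you use the $L^\infty$ form and additionally spell out the density/completion step, which the paper leaves implicit.
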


\noindent
\textbf{Proof of Lemma \ref{homo}.} Suppose $ f \in Y_\sigma$ and let $ \phi \in \tilde{X}_\sigma$ be such that $L(\phi)=f$ in $ \IR^N$.   Then there exists some $ C>0$, independent of $f$ and $ \phi$, such that $ \| \phi\|_{\tilde{X}_\sigma} \le C \| f\|_{Y_\sigma}$.    Our goal is to now  show there is some $ C_1>0$, independent of $ f$ and $ \phi$, such that $ \| \phi\|_{X_\sigma} \le C_1 \| f\|_{Y_\sigma}$ and this will complete the proof.     Define the re-scaled functions $ \phi_m(x)= \phi(x_m + r_m x)$ where $ |x_m|>0$, $ r_m= \frac{|x_m|}{4}$ for $ |x| <1$.  Note that
\[ -\Delta \phi_m(x)= p r_m^2 w(x_m+r_m x)^{p-1} \phi(x_m+r_m x) + r_m^2 f(_m+ r_m x)=:g_m(x), \] for all $ x \in B_1$.    We now obtain some estimates on $\phi_m$ using the following result, which is just an elliptic regularity result coupled with the Sobolev imbedding theorem:   for $ t>N$ there is some $ C_t$ such that

 \begin{equation} \label{ell_reg}
 \sup_{B_\frac{1}{2}} | \nabla \phi_m(x)| \le C_t \left( \int_{|x|<1} |\Delta \phi_m(x)|^t dx \right)^\frac{1}{t} + C_t \int_{|x|< 1} | \phi_m(x)| dx.
    \end{equation}
    We now assume we are in the case of $ |x_m| \ge 1$.  Using the fact that $ f \in Y_\sigma$ and $ \phi \in \tilde{X}_\sigma$ one sees that $ |x_m|^\frac{2}{p-1} |g_m(x)| \le C$ for all $ |x| <1$  and $m$.  Putting these estimates into (\ref{ell_reg}) gives $ \sup_{B_\frac{1}{2}} | \nabla \phi_m(x)| \le C |x_m|^\frac{-2}{p-1}$ and from this we see that
    \[ |x_m|^{\frac{2}{p-1}+1} | \nabla \phi(x_m)| \le C_1.\]   The case of $ |x_m| \le 1$ is handled as above.  Combining these results gives us the desired bound.

\hfill $ \Box$

\textbf{Proof of Theorem \ref{existence}, 1).}   To solve (\ref{eq}) we first consider solving the related problem given by
\begin{equation} \label{absol}
-\Delta u + a(x) \cdot \nabla u = |u|^p \qquad \mbox{ in } \IR^N.
\end{equation}  To do this we perturb off the radial solution $w$ of the advection free problem.  So we look for a solution of the form $ u=\phi+w$.  So we need to find a solution $ \phi$ of
\begin{equation} \label{absol_1}
L(\phi) = - a \cdot \nabla w - a \cdot \nabla \phi + |w+\phi|^p - p w^{p-1} \phi - w^p \qquad \mbox{ in $ \IR^N$},
\end{equation} where $ L(\phi)=-\Delta \phi - p w^{p-1} \phi$.  Letting $T$ be defined as in Lemma \ref{homo} we are looking for a $ \phi \in X_\sigma$ such that
 \begin{equation} \label{ma}
 \phi= -T(a \cdot \nabla w) - T(a \cdot \nabla \phi) + T( |w+\phi|^p - p w^{p-1} \phi - w^p).
   \end{equation}  To find such a $\phi$ we define $ J(\phi)$ to be the mapping on the right hand side of (\ref{ma}) and we will now show that for a suitable $R$ that $J$ is a contraction mapping on the closed ball $B_R$, centered at the origin, in $X_\sigma$.  We will then argue that $u=w+\phi$ is positive.   We begin by showing $J$ is into $B_R$.   In what follows $C$ can depend on $p,a,w $ but not on $x,R,\phi$ and $ \sigma$ provided $ \sigma$ is small. Let $ R>0$ and let $ \phi \in B_R$. Then note that that there is some $ C>0$ such that
    \begin{equation} \label{last}
    \|J(\phi)\|_{X_\sigma} \le C \|a \cdot \nabla w\|_{Y_\sigma} + C \| a \cdot \nabla \phi\|_{Y_\sigma} + C\| |w+\phi|^p- pw^{p-1} \phi - w^p\|_{Y_\sigma}.
    \end{equation}
   We now estimate the terms on the right hand side.
   \begin{eqnarray*}
   \| a \cdot \nabla w\|_{Y_\sigma} & \le & \sup_{|x| \le 1} |a(x)| |x| \sup_{|x| \le 1} |x|^{1+\sigma} | \nabla w(x)| \\ &&  + \sup_{|x|  \ge 1} |x||a(x)| \sup_{|x| \ge 1} |x|^{ \frac{2}{p-1}+1} | \nabla w(x)| \\
   & \le & ( \sup_{x} |a(x)||x|) \| w\|_{X_\sigma} \\
   \end{eqnarray*}  The same argument shows that
   \[ \| a \cdot \nabla \phi\|_{Y_\sigma} \le ( \sup_{x} |x| |a(x)|) \| \phi \|_{X_\sigma}.\] We now approximate the last term in   (\ref{last}).
    For this we need the following real analysis result.  There exists some $ C=C_p$ such that for all numbers $ w>0$ and $ \phi \in \IR$ we have
   \[ \big| |w+\phi|^p- p w^{p-1} \phi - w^p \big| \le C \left( w^{p-2} \phi^2 + | \phi|^p \right).\]  Set   $ \Gamma= |w+\phi|^p-p w^{p-1} \phi - w^p$. Then one sees
   \begin{eqnarray*}
    \| \Gamma \|_{Y_\sigma} & \le & C \sup_{|x| \le 1} |x|^{\sigma +2} \left( w^{p-2} \phi^2 + | \phi|^p \right) \\
    && + C \sup_{|x| \ge 1} |x|^{\frac{2}{p-1}+2} \left( w^{p-2} \phi^2 + | \phi|^p \right) \\
    & := & C I_1 + C I_2.
    \end{eqnarray*}  Then note that
    \begin{eqnarray*}
    I_1 & = & \sup_{|x| \le 1} \left( |x|^{2 - \sigma} w^{p-2} \left( |x|^\sigma \phi(x) \right)^2 + |x|^{ \sigma+2-\sigma p} \left( | \phi(x)| |x|^\sigma \right)^p \right)  \\
    & \le & \sup_{|x| \le 1} \left(|x|^{2-\sigma} w^{p-2} \| \phi\|_{X_\sigma}^2 + |x|^{\sigma +2 - \sigma p} \| \phi\|_{X_\sigma}^p \right) \\
    & \le & C \| \phi\|_{X_\sigma}^2  + C \| \phi\|_{X_\sigma}^p
    \end{eqnarray*} for sufficiently small $ \sigma >0$. One can similarly show that
     \begin{eqnarray*}
     I_2 &\le& \sup_{|x| \ge 1} \left( |x|^\frac{2}{p-1} w \right)^{p-2} \| \phi\|_{X_\sigma}^2 + \| \phi\|_{X_\sigma}^p  \\
     & \le & C \| \phi\|_{X_\sigma}^2  + \| \phi\|_{X_\sigma}^p.
     \end{eqnarray*}  So combining these results we arrive at
     \begin{eqnarray} \label{poo}
      \|J(\phi)\|_{X_\sigma} & \le &  C \sup_x |x| |a(x)|  + C \sup_x |x| |a(x)| \| \phi\|_{X_\sigma} \nonumber \\
      && + C \| \phi\|_{X_\sigma}^2 + C \| \phi\|_{X_\sigma}^p.
      \end{eqnarray}

       Before choosing $R$ we examine the condition on $J$ to be a contraction on $B_R$. First note there is some $ C=C_p$ such that for all numbers $ w >0$ and $ \hat{\phi}, \phi \in \IR$ one has
      \begin{equation} \label{contr}
      \Big|  | \hat{\phi} +w|^p - | \phi + w|^p - p w^{p-1}(\hat{\phi}-\phi) \big| \le  C M | \hat{\phi}-\phi|
      \end{equation} where
      \[ M=   w^{p-2} \left( | \hat{\phi}| + | \phi| \right) + | \hat{\phi}|^{p-1} | + | \phi|^{p-1} .\] 
        Let  $ \hat{\phi}, \phi \in B_R$.  Then
      \[ J(\hat{\phi})- J(\phi)= -T( a \cdot \nabla ( \hat{\phi}- \phi)) + T( |w+\hat{\phi}|^p- |w+\phi|^p - p w^{p-2} (\hat{\phi}- \phi)),\]  and so

      \begin{eqnarray*}
      \| J(\hat{\phi})- J(\phi) \|_{X_\sigma} & \le & C \| a \cdot \nabla ( \hat{\phi}- \phi) \|_{Y_\sigma}  \\
      &&+ C \| |w+\hat{\phi}|^p- |w+\phi|^p- p w^{p-1}( \hat{\phi}-\phi)\|_{Y_\sigma} \\
      & =:&  C J_1 + C J_2
      \end{eqnarray*} Arguing as above one easily sees that $ J_1 \le \sup_x( |x||a(x)|) \| \hat{\phi}- \phi\|_{X_\sigma}$.   Using (\ref{contr}) we see that
      \[ J_2 \le C  \sup_{|x| \le 1} |x|^{2+\sigma} M | \hat{\phi}-\phi|+ C \sup_{|x| \ge 1} |x|^{\frac{2}{p-1}+2} M | \hat{\phi}-\phi| =: CJ_3+ C J_4.\]   We now compute the various terms in $J_3$ and $J_4$.
      \begin{eqnarray*}
      \sup_{|x| \le 1} |x|^{2+\sigma} w^{p-1} | \hat{\phi}| | \hat{\phi} - \phi| & \le & \sup_{|x| \le 1} ( |x|^{2-\sigma} w^{p-2} ) \| \hat{\phi}\|_{X_\sigma} \| \hat{\phi}- \phi\|_{X_\sigma}.
      \end{eqnarray*}  Also we have
       \begin{eqnarray*}
      \sup_{|x| \le 1} |x|^{2+\sigma} | \hat{\phi}|^{p-1} | \hat{\phi} - \phi| & \le & \sup_{|x| \le 1} |x|^{2-\sigma-\sigma(p-1)} \| \hat{\phi}\|_{X_\sigma}^{p-1} \| \hat{\phi}- \phi\|_{X_\sigma}, \\
      & \le & \| \hat{\phi}\|_{X_\sigma}^{p-1} \| \hat{\phi}- \phi\|_{X_\sigma},
      \end{eqnarray*} for small enough $ \sigma>0$.   Combining these results we obtain
      \begin{eqnarray*}
      J_3 &\le& \left( \sup_{|x| \le 1} |x|^{2-\sigma} w^{p-2} 2R + 2 R^{p-1} \right) \| \hat{\phi}- \phi\|_{X_\sigma} \\
      & \le & \left( C R + 2 R^{p-1} \right)  \| \hat{\phi}- \phi\|_{X_\sigma}.
      \end{eqnarray*}  One can argue in a similar fashion to show
      \begin{eqnarray*}
      J_4 & \le & \sup_{|x| \ge 1} \left( |x|^\frac{2}{p-1} w \right)^{p-2} ( \| \hat{\phi}\|_{X_\sigma} + \| \phi\|_{X_\sigma} ) \| \hat{\phi}-\phi\|_{X_\sigma} \\
      && + \left( \| \hat{\phi} \|_{X_\sigma}^{p-1} + \| {\phi} \|_{X_\sigma}^{p-1} \right)\| \hat{\phi}-\phi\|_{X_\sigma} \\
      & \le & \left(  C R  + 2 R^{p-1} \right) \| \hat{\phi}-\phi\|_{X_\sigma}.
      \end{eqnarray*}  Combining the results we obtain an inequality of the form
      \begin{equation} \label{contraction99}
      \| J(\hat{\phi})-J(\phi)\|_{X_\sigma} \le C \left( \sup_x |x| |a(x)| + R + R^{p-1} \right) \| \hat{\phi}- \phi\|_{X_\sigma}.
      \end{equation}  We now pick $R$ and put conditions on $a$.  Fix $ R$ sufficiently small such that $ CR^2 + CR^p \le \frac{R}{10}$ and such that $ CR + CR^{p-1} < \frac{1}{2}$. Now impose a smallness condition on $ a$ such that $  C \sup_x |x| |a(x)| +  C \sup_x |x| |a(x)|R \le \frac{R}{10}$ and $ C \sup_x |x| |a(x)| < \frac{1}{4}$.  These conditions are sufficient to show that $J$ is a contraction mapping on $B_R$ in $ X_\sigma$ and hence by the Contraction Mapping Principle there is some $ \phi \in B_R$ such that $J(\phi)=\phi$, which was the desired result.  So we have $\phi \in B_R$ such that
 \begin{equation} \label{almost}
 -\Delta ( w + \phi ) + a \cdot \nabla ( w+ \phi) = |w+ \phi|^p \qquad \mbox{in } \IR^N.
  \end{equation}
  By taking $ R>0$ smaller, which imposes a further smallness condition on $a$, we can assume that
 \begin{equation} \label{close}
 \sup_{|x| \ge 1} |x|^\frac{2}{p-1} | \phi(x)| \le \frac{1}{10} \inf_{|y| \ge 1} |y|^\frac{2}{p-1}w(y).
  \end{equation} Using this one sees that $ \phi + w >0$ on $ |x| \ge 1$.  Note there are some possible regularity issues for $ \phi$ near the origin.  But taking $ \sigma >0$ small enough and applying elliptic regularity theory, along with a bootstrap, one sees that $ \phi$ is  at least $ C^{2,\alpha}$ in a ball around the origin for some $ \alpha >0$.  One can now apply the maximum principle to see that $ u=w+\phi$ is a positive solution of (\ref{eq}).

\hfill  $\Box$

\textbf{Proof of Theorem \ref{existence}, 2).} First note that a computation shows that $ p_c >\frac{N+1}{N-3}$. For $ R>0$  sufficiently small there is some $u_R>0$ which satisfies (\ref{eq}) and as $R$ gets small one imposes smallness conditions on $a$.   For $ m \ge 2$ an integer let $ E=E_{m,R}>0$ denote the first eigenfunction of $L(E):=-\Delta E + a \cdot \nabla E - p u_R^{p-1} E $ on the ball $ B_m$ with $E=0$ on $ \partial B_m$ and let $ \mu_{m,R}$ denote the first eigenvalue.
  We now multiply the equation for $E$ by $E$ and integrate over $B_m$. Using the fact that $a$ is divergence free (this is only spot we utilize this fact) one sees, after a suitable $L^2$ normalization of $E$, that
\[ \int | \nabla E|^2 = \int p u_R^{p-1} E^2 + \mu_{m,R}.\]  We now extend $E$ outside $B_m$ by setting it to be zero and we use the fact that $w$ satisfies (\ref{superstable}) to arrive at   
\[ (p+\E) \int w^{p-1} E^2 \le  \int p u_R^{p-1} E^2 + \mu_{m,R},\] for some fixed $ \E>0$.
   Note that $ u_R \rightarrow w$ in $ X_\sigma$ as $ R \rightarrow 0$ and so we can argue
   as in (\ref{close}), that for any $ \delta >0$ we can pick $R$ small enough such that $ u_R(x) \le (\delta +1) w(x)$ for all $ |x| \ge 1$.
     Using elliptic regularity and Sobolev imbedding one sees that the restriction of $u_R$ to the unit ball converges to the restriction of $w$  uniformly. And so we can assume that $ u_R^{p-1} \le w^{p-1} +\delta$ for $ |x| \le 1$  for small enough $R$.  Using this estimates and breaking the integrals into the regions $ |x| \ge 1$ and $ |x| \le 1$ one arrives at
   \[
     \left( (p+\E) -p(1+\delta)^{p-1} \right) \int_{|x| \ge 1} w^{p-1} E^2 +(\E- p \delta) \int_{|x| \le 1} w^{p-1} E^2 \le \mu_{m,R},\] for sufficiently small $ R$. Now by taking $ \delta >0$ small enough one sees that for fixed $ R$ small enough we have $ \mu_{m,R} \ge 0$ for all $ m \ge 2$.   We now fix this $R$ and let $ u=u_R$,  $ E_m=E_{m,R} $ and $ \mu_m= \mu_{m,R}$.   So we have that $ E_m >0$ satisfies
      \begin{eqnarray*}
 \left\{ \begin{array}{lcl}
\hfill   -\Delta E_m +a(x) \cdot \nabla E_m   &=& p u^{p-1} E_m + \mu_m E_m \qquad \mbox{ in } B_m  \\
\hfill E_m &=& 0 \qquad  \qquad \qquad \quad \quad \mbox{ on } \partial B_m.
\end{array}\right.
  \end{eqnarray*} Lets assume   that $ \mu_m \rightarrow 0$.   By suitably scaling $E_m$ we can assume that $ E_m(0)=1$.  Now fix $ k \ge 1$ an integer and let $ m \ge k+2$.  Now note that $E_m$ satisfies the same equation on $ B_{k+1}$ and hence by Harnacks inequality there is some $C_k>0$ such that
  \[ \sup_{B_k} E_m \le C_k \inf_{B_k} E_m \le C_k,\] for all $ m \ge k+2$.   Using elliptic regularity one can show that $ E_m$ is bounded in $ C^{1,\alpha}(B_k)$ and by a diagonal argument there is some subsequence of $E_m$, which we still denote by $ E_m$,  which converges to some $ E \ge 0$ locally in $C^{1,\beta}$ for some $ \beta>0$ and  $E(0)=1$.  One can then argue that $ E$ satisfies
  \[ -\Delta E + a(x) \cdot \nabla E = p u^{p-1} E \qquad \mbox{in $ \IR^N$,} \] and then we can apply the strong maximum principle to see that $ E>0$.    This shows that $u$ is a stable solution of (\ref{eq}) which was the desired result.    We now show $ \mu_m \rightarrow 0$.  We begin by putting $ E_m$, which we $L^2$ normalize, into (\ref{hardy_mine}) with $ \beta = \frac{1}{2}$ to arrive at
  \[ \mu_m \int \phi^2 + \frac{1}{2} \int \frac{|\nabla E_m|^2}{E_m^2} \phi^2 \le 2 \int | \nabla \phi |^2 + \int \frac{ a \cdot \nabla E_m}{E_m} \phi^2,\] for all $ \phi \in C_c^\infty(B_m)$.  We now use Young's inequality to arrive at
  \begin{eqnarray*}
  \mu_m \int \phi^2 + \frac{1}{2} \int \frac{| \nabla E_m|^2}{E_m^2} \phi^2 & \le & 2 \int | \nabla \phi^2| \\
  && +\E \int \frac{| \nabla E_m|^2}{E_m^2} \phi^2  \\
  && + \frac{1}{4\E} \int | a|^2 \phi^2.
  \end{eqnarray*}   By taking $ \E>0$ small enough and re-grouping terms  and by using the fact that $ |a(x)| \le \frac{C^2}{|x|^2}$ along with Hardy's inequality, one can obtain
  \[ \mu_m \int \phi^2 \le C \int | \nabla \phi|^2 \qquad \forall \phi \in C_c^\infty(B_m),\] where $C$ is independent of $m$. From this we can conclude that $ \limsup_m \mu_m \le 0$ but we already have $ \mu_m \ge 0$ and hence we have the desired result.

\hfill  $\Box$

\end{document}